\theoremstyle{plain} 
\newtheorem{theorem}{Theorem}[section]
\newtheorem{corollary}[theorem]{Corollary}
\newtheorem{proposition}[theorem]{Proposition}
\newtheorem{lemma}[theorem]{Lemma}
\theoremstyle{definition} 
\newtheorem{remark}[theorem]{Remark}
\newtheorem{example}[theorem]{Example}
\DeclareMathOperator{\Div}{div}
\newcommand{\Z}{\mathbb{Z}}
\newcommand{\R}{\mathbb{R}}
\newcommand{\Sp}{\mathbb{S}}
\numberwithin{equation}{section}
\renewcommand{\vec}[1]{\bm{#1}}
\title{A curve shortening equation with time-dependent mobility
related to grain boundary motions}
\author{Masashi Mizuno}
\address[Masashi Mizuno]%
{Department of Mathematics, College of Science and Technology, Nihon
University, 1-8-14 Kanda-Surugadai, Chiyoda-Ku, Tokyo 101-8308, JAPAN}
\email{mizuno.masashi@nihon-u.ac.jp}
\author{Keisuke Takasao}
\address[Keisuke Takasao]%
{Department of Mathematics/Hakubi Center, Kyoto University, Kitashirakawa-Oiwakecho Sakyo, Kyoto 606-8502, JAPAN}
\email{k.takasao@math.kyoto-u.ac.jp}
\keywords{grain boundary motion, curve shortening equation, weighted monotonicity formula}
\subjclass[2000]{Primary~53C44, Secondary~35R37, 70G75, 74N15}
\begin{document}


\begin{abstract}
 A curve shortening equation related to the
 evolution of grain boundaries is presented.
 This equation is derived from the grain boundary energy by applying
 the maximum dissipation principle.
 Gradient estimates and large time asymptotic behavior of solutions are
 considered.
 In the proof of these results, one key ingredient is a new
 weighted monotonicity formula that incorporates a time-dependent
 mobility.

%
%
 \subjclass{Primary 53C44, Secondary 35R37, 70G75, 74N15}

 \keywords{grain boundary motion, curve shortening equation, weighted monotonicity formula}
\end{abstract}

\maketitle

\section{Introduction}
We study a curve shortening equation related to the evolution of grain
boundaries. Most materials have a polycrystalline microstructure
composed of a myriad of tiny single crystalline grains separated by
grain boundaries. Many experimental results indicate that the microscale
structure of the grain boundaries is strongly related to the macroscale
properties of the material composed of these grain boundaries.

Mathematical modeling of the grain boundaries was first studied by
Mullins and Herring~\cite{doi:10.1007-978-3-642-59938-5_2,
doi:10.1063-1.1722511,doi:10.1063-1.1722742}. In particular when the
grain boundary energy depends only on the length and shape of these
grain boundaries, a curve shortening equation or a mean curvature flow
equation is obtained. Both equations are quasilinear and underlie
important problems in geometric analysis; hence there is a diversity of
research looking into these problems.

However, from the perspective of research on grain boundaries, it is
also important to treat other state variables. For instance, grain
boundaries are regarded as some singularity in lattice orientation of
each grain. Kinderlehrer-Liu~\cite{MR1833000} introduced
misorientations, which are the differences in lattice orientation of two
grains separated by a grain boundary, as a parameter in the expression
for the grain boundary energy. They derived geometric evolution
equations based on the maximal dissipation principle.
Epshteyn-Liu-Mizuno~\cite{arXiv:1903.11512, arXiv:1910.08022} considered
the case that the misorientation depends on the time and demonstrated
the local existence of network solutions provided the grain boundaries
are straight line segments. Nevertheless, the interaction between
curvature and misorientation is not well-known.

\begin{figure}
 \centering
 \includegraphics[width=6cm]{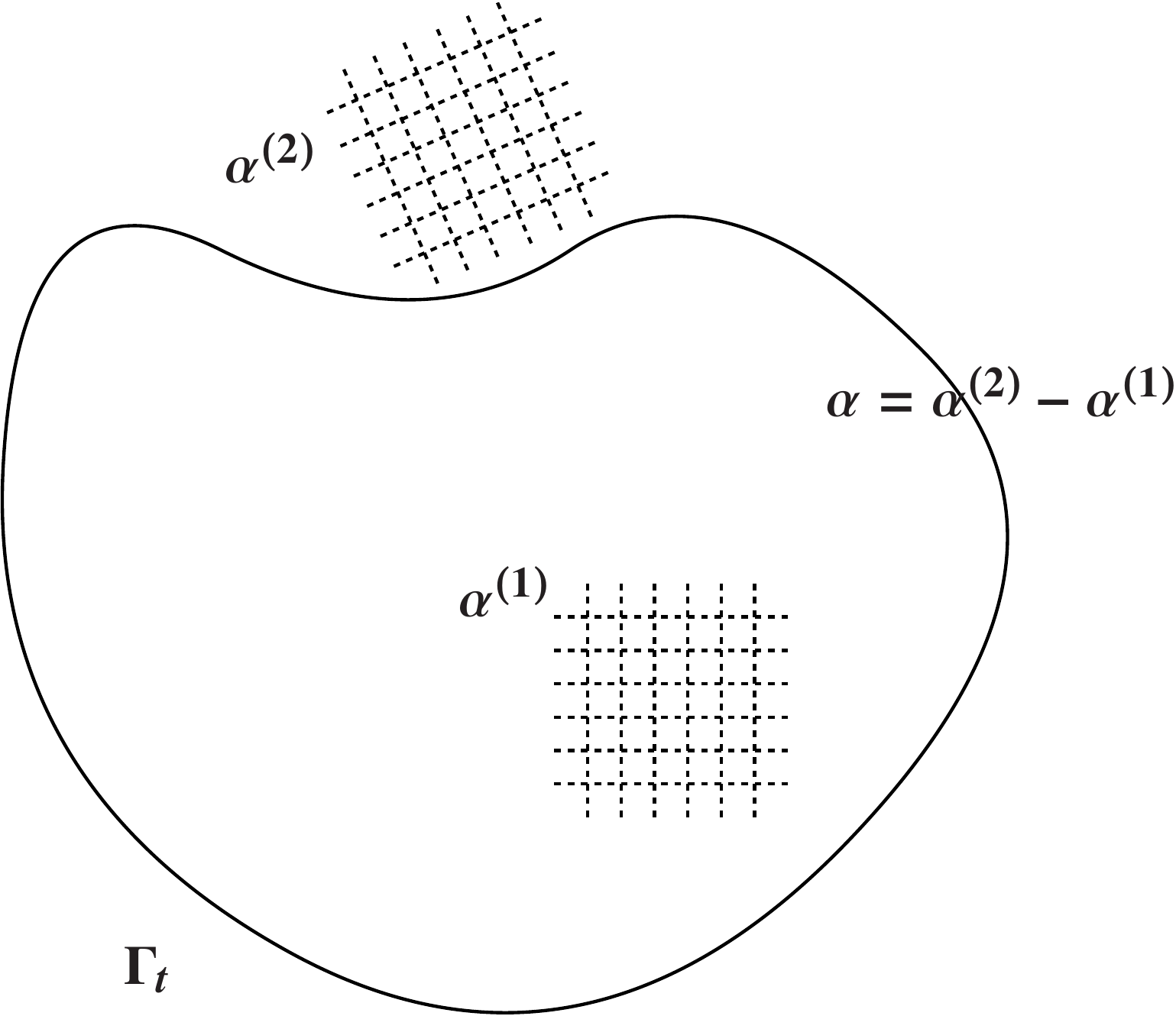}
 \caption{Let $\Gamma_t\subset\R^2$ be a smooth Jordan
 curve, and let $\alpha^{(1)}$, $\alpha^{(2)}$ be time-dependent unknown
 functions, called orientations. The difference
 $\alpha=\alpha^{(2)}-\alpha^{(1)}$ is called the misorientation.}
 \label{fig:1.1}
\end{figure}

In this article, we study the grain boundary energy that include
time-dependent misorientations as a state variable.  First, we consider
a smooth Jordan curve $\Gamma_t\subset\R^2$ as a grain boundary, with
$v_n$ and $\kappa$ denoting the normal velocity and the curvature of
$\Gamma_t$, respectively. We assume that the misorientation
$\alpha(t)=\alpha^{(2)}(t)-\alpha^{(1)}(t)$ depends on the time and is
independent of the position vector of the grain boundary(See Figure
\ref{fig:1.1}). Taking the grain boundary energy as
\begin{equation*}
 \int_{\Gamma_t}
  \sigma(\alpha)\,d\mathscr{H}^1
  =
  \sigma(\alpha)
   |\Gamma_t|,
\end{equation*}
we derived a system of evolution equations obtained from the maximum
dissipation principle:
\begin{equation}
 \label{eq:1.1}
  \left\{
  \begin{aligned}
   v_n
   &=
   \mu
   \sigma(\alpha)
   \kappa,&
   \quad
   &\text{on}\ \Gamma_t,\ t>0, \\
   \alpha_t
   &=
   -
   \gamma
   \sigma_\alpha(\alpha)
   |\Gamma_t|,&
   \quad
   &t>0.
  \end{aligned}
 \right.
\end{equation}
Here, $\mu $ and $\gamma$ denote positive constants, $\sigma :
\mathbb{R} \to [0,\infty)$ denotes a given smooth function, and $|\Gamma
_t|$ the length of $\Gamma_t$. We present a derivation of \eqref{eq:1.1}
in Section 2. Our system consists of two equations, one being a curve
shortening equation with time-dependent mobility, and the other
describing the evolution of the misorientation. The most significant
difference between the PDE in \cite{MR1833000} and \eqref{eq:1.1} is the
time-dependent misorientation. The evolution of a misorientation was
considered in \cite{arXiv:1903.11512, arXiv:1910.08022}. However, only
the relaxation limit $\mu\rightarrow\infty$ was studied, namely, the
authors employed straight line segments to be grain boundaries. On the
other hand, they considered curved grain boundaries in the derivation of
the system. For this reason, understanding the relationship between the
effect of curvature and the evolution of misorientations is important.

In regard to curve shortening flow, specifically time-independent
misorientations, a solution of \eqref{eq:1.1} exists in a finite time if
the initial data is a Jordan curve.  For example, if $\inf_{\alpha \in
\mathbb{R}} \sigma(\alpha)>C$ and $\tilde \Gamma _0=\{ |x|=R \}$ for
some constants $C>0$ and $R>0$, then the solution $(\tilde \Gamma _t,
\tilde \alpha )$ of \eqref{eq:1.1} with the initial data $(\tilde
\Gamma_0 , \tilde \alpha _0)$ is also a circle and the radius $r (t)$
coincides with
\begin{equation*}
 \sqrt{R^2-2\mu \int _0 ^t \sigma(\tilde \alpha (s))
  ds}.
\end{equation*}
Note that the comparison principle implies $ \Gamma _t \subset \{ |x|
\leq \sqrt{R^2-2\mu C t } \} $ for any solution $(\Gamma _t,\alpha)$
such that $\Gamma _0 \subset \{ |x| \leq R \}$, since
$\{|x|=\sqrt{R^2-2\mu C t }\}$ is a solution of $v_n=\mu C \kappa$.
Therefore, any solution starting from a Jordan curve disappears in a
finite time. In contrast, as for curve shortening flow, the solution is
expected to converge to a straight line under suitable conditions,
although the effects from boundary conditions and junctions also need to
be considered (see Example \ref{example:2.3}). The mean curvature flow
of the graph has been studied in \cite{MR0998603,MR1025164,MR1117150},
but is not well-known in regard to effects concerning the evolving
misorientations. Consequently, to understand the nature of the time
global classical solution of \eqref{eq:1.1}, we consider two unbounded
grains, and their grain boundary represented by a periodic graph (see
\eqref{eq:2.13} below).  In this situation, we study the properties of
the time global solutions.

To obtain the solvability of the system in the graphical setting, a
priori gradient estimates for solutions of our system play an important
role. For the curve shortening equation with constant mobility,
Huisken~\cite{MR1030675} derived the so-called monotonicity formula
(cf. \cite{MR0784476}) and Ecker-Huisken\cite{MR1025164} provided
gradient estimates for the entire graph using Huisken's monotonicity
formula (See also \cite{MR3661017,MR3058702}. Sharp gradient estimates
are given in \cite{MR2099114}). Key ingredients of Huisken's
monotonicity formula are the properties of the standard backward heat
kernel. We derive the weighted monotonicity formula in similar manner as
for Huisken's formula(cf. Ecker~\cite[Theorem 4.13]{MR2024995}) for the
curve shortening equation with a \emph{time-dependent} mobility (see
Theorem \ref{thm:3.1} below). Then, using the weighted monotonicity
formula we obtain gradient estimates and the global existence of
solutions for the problem (see Theorem \ref{thm:4.2} and Theorem
\ref{thm:4.5} below).  Our new argument is to replace the standard
backward heat kernel with one with time-dependent thermal
conductivity. Finally, we prove that the time global solution converges
to a straight line exponentially in $C^2$ (see Theorem \ref{thm:5.1}).

The paper is organized as follows. In Section 2, we set up the model and
derive evolution equations using the maximum dissipation principle. We
consider a graph of an unknown function as a grain boundary and derive a
governing equation from the model.  In Section 3, we briefly review
backward heat kernels with time-dependent thermal conductivity. Next, we
obtain the weighted monotonicity identity for our problem. Using this
identity, we derive gradient estimates and the global existence of
solutions to our problem in Section 4. In Section 5, we deduce the large
time asymptotic behavior of the global solution.

\section{Derivation of the system}
We begin by deriving the governing equations of our systems from the
energy dissipation principle. This approach is taken from
\cite{arXiv:1903.11512,arXiv:1910.08022}, without the effect of the
triple junction drag. We consider a single grain boundary $\Gamma_t$
represented by point vector $\vec{\xi}(s,t)\in\R^2$ for $0\leq s\leq1$
and $t>0$. Note that $s$ is \emph{not} necessarily the arclength
parameter. To understand the relationship between misorientations and
the effect of curvature, we impose the periodic boundary condition,
specifically $\vec{\xi}(0,t)=\vec{\xi}(1,t)$ and
$\vec{\xi}_s(0,t)=\vec{\xi}_s(1,t)$ for $t>0$. We denote a tangent
vector by $\vec{b}=\vec{\xi}_s$ and a normal vector by
$\vec{n}=R\vec{b}$ where $R$ is a matrix describing an anti-clockwise
rotation through angle $\pi/2$. Again we remark that the tangent vector
$\vec{b}$ and the normal vector $\vec{n}$ are not necessarily unit
vectors because in general $s$ is not the arclength parameter.

\begin{figure}
 \centering \includegraphics[width=6cm]{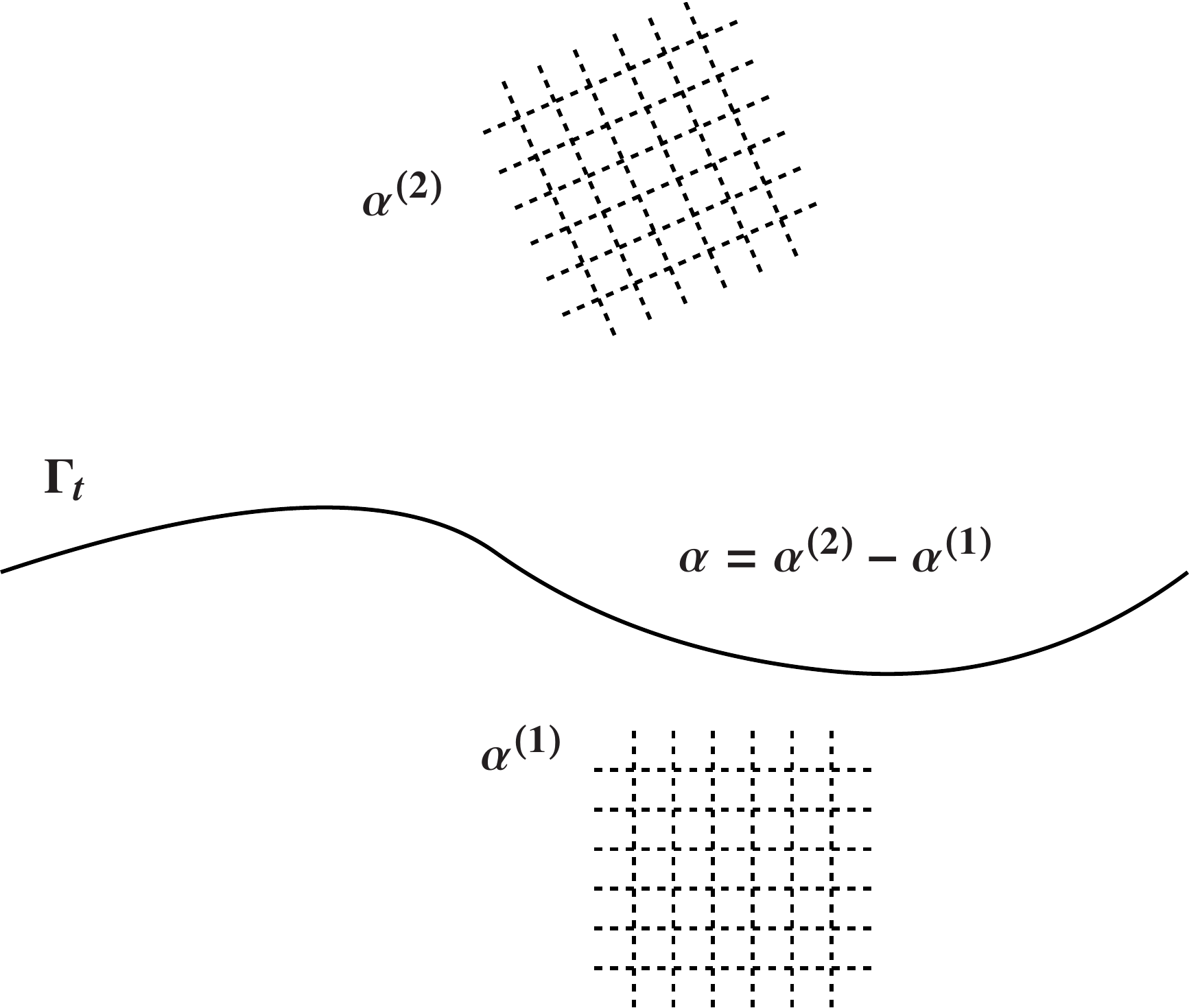} \caption{Model
 of a single grain boundary $\Gamma_t$. State variables $\alpha^{(1)}$
 and $\alpha^{(2)}$ represent the lattice orientations of the grains.
 State variable $\alpha=\alpha^{(2)}-\alpha^{(1)}$ defines the
 misorientation on the grain boundary $\Gamma_t$.}  \label{fig:2.1}
\end{figure}

Next, we let $\alpha=\alpha(t)$ be the lattice misorientation on the
grain boundary $\Gamma_t$. We assume that the lattice misorientation
$\alpha$ depends on time $t$, but is independent of parameter $s$. We
consider the normal vector $\vec{n}$ and the lattice misorientation
$\alpha$ as state variables so we define the interfacial grain boundary
energy density of $\Gamma_t$ as
\begin{equation*}
 \sigma
  =
  \sigma(\vec{n},\alpha)
  \geq0.
\end{equation*}
Thus the total grain boundary energy of the system $\Gamma_t$ is written
\begin{equation}
 \label{eq:2.1}
  E(t)
  =
  \int_{\Gamma_t}
  \sigma(\vec{n},\alpha)\,d\mathscr{H}^1
  =
  \int_{0}^{1}
  \sigma(\vec{n}(s,t),\alpha(t))|\vec{b}(s,t)|\,ds,
\end{equation}
where $\mathscr{H}^1$ is the $1$-dimensional Hausdorff measure and
$|\cdot|$ is the standard Euclidean vector norm on $\R^2$. Next, we
assume that $\sigma$ is a non-negative smooth function and positively
homogeneous of degree $0$ in $\vec{n}$.

Let us now derive the grain boundary motion from the dissipation
principle of the total grain boundary energy \eqref{eq:2.1}. Let $\hat{\
}$ be the normalization operator of vectors, e.g.,
$\hat{\vec{b}}=\frac{\vec{b}}{|\vec{b}|}$. Next, we compute the
dissipation rate of the total grain boundary energy $E(t)$ at time $t$,
\begin{equation}
 \label{eq:2.2}
  \begin{split}
   \frac{d}{dt}E(t)
   &=
   \int_{0}^{1}
   \sigma_{\vec{n}}
   \cdot\frac{d\vec{n}}{dt}|\vec{b}|\,ds
   +
   \int_{0}^{1}\sigma
   \frac{\vec{b}}{|\vec{b}|}
   \cdot\frac{d\vec{b}}{dt}\,ds
   +
   \int_{0}^{1}\sigma_\alpha
   \frac{d\alpha}{dt}|\vec{b}|\,ds
   \\
   &=
   \int_{0}^{1}
   \left(
   |\vec{b}|
   \mathstrut^t\!R\sigma_{\vec{n}}
   +
   \sigma
   \hat{\vec{b}}
   \right)
   \cdot\frac{d\vec{b}}{dt}\,ds
   +
   \int_{0}^{1}
   \sigma_\alpha
   \frac{d\alpha}{dt}|\vec{b}|\,ds.
  \end{split}
\end{equation}
Now, consider a polar angle $\theta$ for $\vec{n}$ and set
$\vec{n}=|\vec{n}|(\cos\theta,\sin\theta)$. Since $\sigma$ is positively
homogeneous of degree $0$ in $\vec{n}$, we have
\begin{equation}
 \label{eq:2.7}
  \begin{aligned}
   0
   &=
   \frac{d}{d\lambda}
   \sigma(\lambda\vec{n},\alpha)
   \bigg|_{\lambda=1}
   =
   \sigma_{\vec{n}}
   (\vec{n},\alpha)\cdot\vec{n},&\quad
   \mathstrut^t\!R
   \sigma_{\vec{n}}
   &=
   (\mathstrut{}^t\!R
   \sigma_{\vec{n}}\cdot\hat{\vec{n}})\hat{\vec{n}}, \\
   \sigma_\theta
   &:=
   \frac{d}{d\theta}\sigma(\vec{n},\alpha)
   =
   |\vec{n}|\mathstrut{}^t\!R\sigma_{\vec{n}}\cdot\hat{\vec{n}},
   &\quad
   \sigma_\theta
   \hat{\vec{n}}
   &=
   |\vec{b}|
   \mathstrut^t\!R
   \sigma_{\vec{n}},
  \end{aligned}
\end{equation}
and thus, we define the vector $\vec{T}$ known as the line tension
vector,
\begin{equation*}
 \vec{T}
  :=\sigma_\theta
  \hat{\vec{n}}
  +
  \sigma
  \hat{\vec{b}}
  =
  |\vec{b}|
   \mathstrut^t\!R\sigma_{\vec{n}}
   +
   \sigma
   \hat{\vec{b}}.
\end{equation*}
Next, using a change of variable
\begin{equation}
 \frac{d\vec{b}}{dt}
 =
 \frac{d}{ds}
 \frac{d\vec{\xi}}{dt},
\end{equation}
we rewrite \eqref{eq:2.2} as
\begin{equation}
 \label{eq:2.3}
  \frac{d}{dt}E(t)
  =
  \int_{0}^{1}
  \vec{T}
  \cdot
  \frac{d}{ds}
  \frac{d\vec{\xi}}{dt}\,ds
  +
  \int_{0}^{1}
  \sigma_\alpha
  \frac{d \alpha}{dt}|\vec{b}|\,ds
  =
  -
  \int_{0}^{1}
  \vec{T}_s
  \cdot
  \frac{d\vec{\xi}}{dt}\,ds
  +
  \int_{0}^{1}
  \sigma_\alpha
  \frac{d\alpha}{dt}|\vec{b}|\,ds
\end{equation}
from the periodic condition $\vec{b}(0,t)=\vec{b}(1,t)$.

For the reader's convenience, we recall a property of the derivative of
the line tension vector $\vec{T}$.

\begin{lemma}[cf. \cite{MR1833000}]
 \label{lem:2.1}
 Let $\kappa$ be the curvature of $\Gamma_t$. Then
 \begin{equation}
  \label{eq:2.4}
   \vec{T}_s
   =
   |\vec{b}|
   (\sigma_{\theta\theta}+\sigma)\kappa\hat{\vec{n}}.
 \end{equation}
\end{lemma}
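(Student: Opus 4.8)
The plan is to express $\vec{T}$ through the polar angle $\theta$ of $\vec{n}$ and then differentiate term by term, the whole point being a two-term cancellation. Since $\sigma$ is positively homogeneous of degree $0$ in $\vec{n}$, writing $\vec{n}=|\vec{n}|(\cos\theta,\sin\theta)$ we have $\sigma(\vec{n},\alpha)=\sigma((\cos\theta,\sin\theta),\alpha)$; hence on $\Gamma_t$ (i.e.\ for fixed $t$, so that $\alpha=\alpha(t)$ is independent of $s$) the parameter $s$ enters $\sigma$ only through $\theta=\theta(s,t)$, and therefore $\partial_s\sigma=\sigma_\theta\,\theta_s$ and $\partial_s\sigma_\theta=\sigma_{\theta\theta}\,\theta_s$, with $\sigma_\theta$, $\sigma_{\theta\theta}$ as in \eqref{eq:2.7}. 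Since $R$ is a rotation, $|\vec{n}|=|\vec{b}|$, $\hat{\vec{n}}=R\hat{\vec{b}}=(\cos\theta,\sin\theta)$, and $\hat{\vec{b}}=R^{-1}\hat{\vec{n}}=(\sin\theta,-\cos\theta)$.

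Next I would record the Frenet-type relations with respect to $s$. From $\hat{\vec{n}}=(\cos\theta,\sin\theta)$ and $\hat{\vec{b}}=(\sin\theta,-\cos\theta)$ one differentiates directly:
\begin{equation*}
 \hat{\vec{n}}_s=\theta_s(-\sin\theta,\cos\theta)=-\theta_s\hat{\vec{b}},
 \qquad
 \hat{\vec{b}}_s=\theta_s(\cos\theta,\sin\theta)=\theta_s\hat{\vec{n}}.
\end{equation*}
The only genuinely geometric ingredient is the identity $\theta_s=|\vec{b}|\kappa$: since $|\vec{b}|\,ds$ is the arclength element along $\Gamma_t$, this is exactly the statement that $\kappa$ is the derivative of the normal angle with respect to arclength, which is the sign convention for the curvature compatible with the choice $\vec{n}=R\vec{b}$ used throughout. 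If one prefers, $\theta_s=|\vec{b}|\kappa$ can be verified directly from $\kappa=|\vec{b}|^{-3}\,\vec{\xi}_{ss}\cdot R\vec{\xi}_s$ together with $\vec{n}=R\vec{\xi}_s=|\vec{n}|(\cos\theta,\sin\theta)$.

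Finally I would differentiate $\vec{T}=\sigma_\theta\hat{\vec{n}}+\sigma\hat{\vec{b}}$ by the product rule and substitute the relations above:
\begin{equation*}
 \vec{T}_s
 =\sigma_{\theta\theta}\theta_s\,\hat{\vec{n}}+\sigma_\theta\,\hat{\vec{n}}_s
 +(\partial_s\sigma)\,\hat{\vec{b}}+\sigma\,\hat{\vec{b}}_s
 =\sigma_{\theta\theta}\theta_s\,\hat{\vec{n}}-\sigma_\theta\theta_s\,\hat{\vec{b}}
 +\sigma_\theta\theta_s\,\hat{\vec{b}}+\sigma\theta_s\,\hat{\vec{n}},
\end{equation*}
where the third term used $\partial_s\sigma=\sigma_\theta\theta_s$. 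The two $\hat{\vec{b}}$-contributions cancel, leaving $\vec{T}_s=(\sigma_{\theta\theta}+\sigma)\theta_s\,\hat{\vec{n}}=|\vec{b}|(\sigma_{\theta\theta}+\sigma)\kappa\,\hat{\vec{n}}$, which is \eqref{eq:2.4}. I do not anticipate any serious obstacle: the substance is simply the cancellation of the $\hat{\vec{b}}$-component, and the only point requiring care is to keep the orientation conventions for $R$, $\theta$ and $\kappa$ mutually consistent.
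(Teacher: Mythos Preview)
Your proof is correct. The underlying idea is the same as the paper's---differentiate $\vec{T}=\sigma_\theta\hat{\vec{n}}+\sigma\hat{\vec{b}}$ using the Frenet--Serret relations and observe the cancellation of the tangential component---but your execution is more direct. The paper differentiates $\sigma$ and $\sigma_\theta$ via the chain rule through the full vector $\vec{n}$, obtaining terms like $\sigma_{\vec{n}}\cdot\vec{n}_s$ and $\sigma_{\vec{n}\theta}\cdot\vec{n}_s$, and then has to invoke the homogeneity identities \eqref{eq:2.8} to convert these back into $\sigma_\theta$, $\sigma_{\theta\theta}$; you instead exploit the degree-$0$ homogeneity at the outset to regard $\sigma$ as a function of $\theta$ alone, so that $\partial_s\sigma=\sigma_\theta\theta_s$ and $\partial_s\sigma_\theta=\sigma_{\theta\theta}\theta_s$ immediately, and the cancellation is then transparent. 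Both routes use exactly the same ingredients (homogeneity and $\theta_s=|\vec{b}|\kappa$); yours simply avoids the detour through $\sigma_{\vec{n}}$.
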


\begin{proof}
 Denote $\partial_{\Gamma_t}=\frac{1}{|\vec{b}|}\partial_s$, which is the
 arc-length derivative along with $\Gamma_t$. From the
 Frenet-Serret formula, we obtain
 \begin{equation}
  \label{eq:2.5}
   \hat{\vec{b}}_s
   =
   |\vec{b}|
   \partial_{\Gamma_t}
   \hat{\vec{b}}
   =
   |\vec{b}|
   \kappa
   \hat{\vec{n}},
   \quad
   \hat{\vec{n}}_s
   =
   |\vec{b}|
   \partial_{\Gamma_t}
   \hat{\vec{n}}
   =
   -|\vec{b}|
   \kappa
   \hat{\vec{b}}.
 \end{equation}
 Hence, we obtain,
 \begin{equation}
  \label{eq:2.6}
   \begin{split}
    \vec{T}_s
    &=
    \left(
    \sigma_{\vec{n}\theta}\cdot
    \vec{n}_s
    \right)
    \hat{\vec{n}}
    +
    \sigma_\theta
    \hat{\vec{n}}_s
    +
    \left(
    \sigma_{\vec{n}}
    \cdot
    \vec{n}_s
    \right)
    \hat{\vec{b}}
    +
    \sigma
    \hat{\vec{b}}_s \\
    &=
    \left(
    \mathstrut^t\!R\sigma_{\vec{n}\theta}
    \cdot
    \vec{b}_s
    +
    |\vec{b}|\sigma\kappa
    \right)
    \hat{\vec{n}}
    +
    \left(
    -|\vec{b}|\sigma_\theta\kappa+
    \mathstrut^t\!R\sigma_{\vec{n}}
    \cdot
    \vec{b}_s
    \right)
    \hat{\vec{b}}.
   \end{split}
 \end{equation}
 Since $\sigma$ and $\sigma_\theta$ are positively
 homogeneous of degree $0$ in $\vec{n}$, as the similar calculation on
 \eqref{eq:2.7}, we have
 \begin{equation}
  \label{eq:2.8}
   \sigma_\theta
   \hat{\vec{n}}
   =
   |\vec{b}|
   \mathstrut^t\!R
   \sigma_{\vec{n}},\quad
   \sigma_{\theta\theta}
   \hat{\vec{n}}
   =
   |\vec{b}|
   \mathstrut^t\!R
   \sigma_{\vec{n}\theta}.
 \end{equation}
 Using the orthogonal relation $\vec{b}\cdot\hat{\vec{n}}=0$ and the
 Frenet-Serret formula \eqref{eq:2.5}, we obtain
 $\vec{b}_s\cdot\hat{\vec{n}} = -\vec{b}\cdot\hat{\vec{n}}_s =
 |\vec{b}|^2\kappa$. Thus, from \eqref{eq:2.8}
 \begin{equation*}
  \begin{split}
   \mathstrut^t\!R\sigma_{\vec{n}\theta}
   \cdot
   \vec{b}_s
   +
   |\vec{b}|\sigma\kappa
   &=
   \frac{1}{|\vec{b}|}
   \sigma_{\theta\theta}\hat{\vec{n}}
   \cdot
   \vec{b}_s
   +
   |\vec{b}|\sigma\kappa
   =
   |\vec{b}|
   \left(
   \sigma_{\theta\theta}
   +
   \sigma
   \right)
   \kappa,
   \\
   -
   |\vec{b}|\sigma_\theta\kappa
   +
   \mathstrut^t\!R\sigma_{\vec{n}}
   \cdot
   \vec{b}_s
   &=
   -
   |\vec{b}|\sigma_\theta\kappa
   +
   \frac{1}{|\vec{b}|}
   \sigma_\theta
   \hat{\vec{n}}
   \cdot
   \vec{b}_s
   =
   0
  \end{split}
 \end{equation*}
 and hence we derive \eqref{eq:2.4}.
\end{proof}
To ensure that the whole system is dissipative, i.e.
\begin{equation*}
 \frac{d}{dt}E(t)\leq 0,
\end{equation*}
we impose the so called Mullins equation or the curve shortening
equation for the evolution of the grain boundary $\Gamma_t$. From Lemma
\ref{lem:2.1}, $\vec{T}_s$ is proportional to the normal vector on
$\Gamma_t$ and therefore we impose
\begin{equation}
 \label{eq:2.9}
 v_n
  =
  \mu\partial_{\Gamma_t}\vec{T}\cdot\hat{\vec{n}}
  =
  \mu(\sigma_{\theta\theta}+\sigma)\kappa\quad
  \text{on}\ \Gamma_t,
\end{equation}
where $v_n$ denotes the normal velocity vector of $\Gamma_t$ and $\mu>0$
a positive mobility constant. Note that equation \eqref{eq:2.9} may
be derived from the variation of the energy $E$ with respect to the
curve $\vec{\xi}$. Indeed, for any test function $\vec{\phi}\in
C^\infty_0(0,1)$,
\begin{equation}
 \begin{split}
  \frac{\delta E}{\delta \vec{\xi}}[\vec{\phi}]
  &=
  \int_0^1
  \left(
  (\sigma_{\vec{n}}(\vec{n},\alpha)
  \cdot R\vec{\phi}_s)|\vec{b}|
  +\sigma(\vec{n},\alpha)
  \hat{\vec{b}}\cdot\vec{\phi}_s
  \right)
  \,ds \\
  &=
  \int_0^1
  \left(
  |\vec{b}|\mathstrut^{t}\!R
  \sigma_{\vec{n}}(\vec{n},\alpha)
  +\sigma(\vec{n},\alpha)
  \hat{\vec{b}}
  \right)
  \cdot
  \vec{\phi}_s
  \,ds \\
  &=
  -\int_{\Gamma_t}
  \partial_{\Gamma_t}
  \left(
  |\vec{b}|\mathstrut^{t}\!R
  \sigma_{\vec{n}}(\vec{n},\alpha)
  +\sigma(\vec{n},\alpha)
  \hat{\vec{b}}
  \right)
  \cdot
  \vec{\phi}
  \,d\mathscr{H}^1,
 \end{split}
\end{equation}
thus \eqref{eq:2.9} is turned into
\begin{equation*}
 \frac{d\vec{\xi}}{dt}
  =
  -\mu
  \frac{\delta E}{\delta \vec{\xi}}.
\end{equation*}
Since $v_n=\vec{\xi}_t\cdot\hat{\vec{n}}$, we obtain
\begin{equation}
 \vec{T}_s\cdot\frac{d\vec{\xi}}{dt}
  =
  \frac{1}{\mu}
  |v_n|^2
  |\vec{b}|
  \geq
  0.
\end{equation}
Next, we consider the law underlying evolution of lattice misorientations.
Since $\alpha$ is independent of the parameter $s$,
\begin{equation*}
 \int_{0}^{1}
  \sigma_\alpha
  \frac{d\alpha}{dt}|\vec{b}|\,ds
  =
  \frac{d\alpha}{dt}
  \int_{0}^{1}
  \sigma_\alpha
  |\vec{b}|\,ds
  =
  \frac{d\alpha}{dt}
  \int_{\Gamma_t}
  \sigma_\alpha
  \,d\mathscr{H}^1,
\end{equation*}
hence for a constant $\gamma>0$, we impose the following relation for
the rate of change of the lattice misorientation;
\begin{equation}
 \label{eq:2.10}
 \frac{d\alpha}{dt}
  =
  -\gamma
  \int_{\Gamma_t}
  \sigma_\alpha
  \,d\mathscr{H}^1,
\end{equation}
to ensure the whole system is dissipative, namely
$\frac{d}{dt}E(t)\leq0$. Note that our proposed equation \eqref{eq:2.10}
can be derived from the variation of the energy $E$ with respect to lattice
misorientation $\alpha$. Indeed for any number $\xi\in\R$,
\begin{equation*}
 \frac{\delta E}{\delta \alpha}[\xi]
  =
  \frac{d}{d\varepsilon}\bigg|_{\varepsilon=0}
  \int_{0}^{1}
  \sigma(\vec{n},\alpha+\varepsilon\xi)
  |\vec{b}|\,ds
 =
  \xi
  \int_{0}^{1}
  \sigma_\alpha(\vec{n},\alpha)|\vec{b}|\,ds,
\end{equation*}
thus \eqref{eq:2.10} becomes
\begin{equation}
 \frac{d\alpha}{dt}
  =
  -\gamma
  \frac{\delta E}{\delta \alpha}.
\end{equation}
Now, substituting equations \eqref{eq:2.9} and \eqref{eq:2.10} in the
rate of change for the total energy \eqref{eq:2.3}, we find that the
whole system is dissipative, namely
\begin{equation}
 \label{eq:2.21}
 \frac{d}{dt}E(t)
  =
  -\frac{1}\mu
  \int_{\Gamma_t}
  |v_n|^2\,d\mathscr{H}^1
  -\frac{1}{\gamma}
  \left|
   \frac{d\alpha}{dt}
  \right|^2
  \leq0.
\end{equation}

\begin{remark}
 We emphasize in \eqref{eq:2.21} that the evolving misorientation
 $\alpha$ has a dissipative structure. See also
 \cite{arXiv:1903.11512}. In contrast, the misorientation is a fixed
 parameter in \cite{MR1833000}.
\end{remark}

We next consider the grain boundary motion for the isotropic case. The
grain boundary energy density $\sigma$ is independent of the normal
vector $\vec{n}$. Then, the equations \eqref{eq:2.9} and \eqref{eq:2.10}
become
\begin{equation}
 \label{eq:2.12}
 \left\{
  \begin{aligned}
   v_n
   &=
   \mu
   \sigma(\alpha)
   \kappa,&
   \quad
   &\text{on}\ \Gamma_t,\ t>0, \\
   \alpha_t
   &=
   -
   \gamma
   \sigma_\alpha(\alpha)
   |\Gamma_t|,&
   \quad
   &t>0.
  \end{aligned}
 \right.
\end{equation}
Imposing the periodic boundary
condition, we put $\mathbb{T}:=\R/\Z$ and write $\Gamma_t$ as a graph of
an unknown function $u=u(x,t)$ on $\mathbb{T}\times[0,\infty)$, namely
\begin{equation}
 \vec{\xi}(x,t)=(x,u(x,t)),\quad  x\in\mathbb{T},\quad t>0.
\end{equation}
With the initial data $\vec{\xi}(x,0)=(x,u_0(x))$,
$\alpha(0)=\alpha_0\in\R$, and the periodic boundary
condition $\vec{\xi}(0,t)=\vec{\xi}(1,t)$,
$\vec{\xi}_s(0,t)=\vec{\xi}_s(1,t)$, equation \eqref{eq:2.12} becomes
\begin{equation}
 \label{eq:2.13}
 \left\{
  \begin{aligned}
   \frac{u_t}{\sqrt{1+|u_x|^2}}
   &=
   \mu\sigma(\alpha)
   \left(
   \frac{u_x}{\sqrt{1+|u_x|^2}}
   \right)_x,&
   \quad
   &x\in\mathbb{T},\ t>0, \\
   \alpha_t
   &=
   -
   \gamma
   \sigma_\alpha(\alpha)
   |\Gamma_t|, &
   \quad
   &t>0,
   \\
   u(0,t)
   &=
   u(1,t),\quad
   u_x(0,t)
   =
   u_x(1,t),
   &\quad
   &t>0, \\
   u(x,0)&=
   u_0(x),&\quad
   &x\in \mathbb{T},\\
   \alpha(0)&=
   \alpha_0.
  \end{aligned}
 \right.
\end{equation}
Indeed, the normal velocity $v_n$ and the curvature
$\kappa$ are given by
\begin{equation*}
 \begin{split}
   v_n
   &=
   \vec{\xi}_t\cdot\hat{\vec{n}}
   =
   (0,u_t)\cdot
   \left(
   \frac{1}{\sqrt{1+|u_x|^2}}(-u_x,1)
   \right)
   =
   \frac{u_t}{\sqrt{1+|u_x|^2}}, \\
   \kappa
   &=
   \partial_{\Gamma_t}
   \hat{\vec{b}}
   \cdot
   \hat{\vec{n}}
   =
   \frac{1}{\sqrt{1+|u_x|^2}}
   \left(
   \frac{1}{\sqrt{1+|u_x|^2}}
   (1,u_x)
   \right)_x
   \cdot
   \left(
   \frac{1}{\sqrt{1+|u_x|^2}}
   (-u_x,1)
   \right) \\
   &=
   \left(
   \frac{u_x}{\sqrt{1+|u_x|^2}}
   \right)_x.
 \end{split}
\end{equation*}
From \eqref{eq:2.1}, the associated total grain boundary energy $E(t)$ is
given by
\begin{equation}
 E(t)
  =
  \int_{\Gamma_t}\sigma(\alpha)
  =
  \sigma(\alpha)
  \int_0^1 \sqrt{1+|u_x|^2}\,dx.
\end{equation}

\begin{proposition}[Free energy dissipation]
 Let $u$ be a solution of \eqref{eq:2.13}. Then
 \begin{equation}
  \label{eq:2.14}
  \frac{dE}{dt}
  =
   -\frac1\gamma
   |\alpha_t|^2
   -\frac1\mu
   \int_0^1
   \left(
    \frac{u_t}{\sqrt{1+|u_x|^2}}
    \right)^2
   \sqrt{1+|u_x|^2}\,dx.
 \end{equation}
\end{proposition}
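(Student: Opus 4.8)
The plan is to differentiate $E(t)=\sigma(\alpha)\int_0^1\sqrt{1+|u_x|^2}\,dx$ directly in time and then eliminate the time derivatives $\alpha_t$ and $u_t$ using the two evolution equations in \eqref{eq:2.13}. By the product rule,
\begin{equation*}
 \frac{dE}{dt}
 =\sigma_\alpha(\alpha)\,\alpha_t\int_0^1\sqrt{1+|u_x|^2}\,dx
 +\sigma(\alpha)\,\frac{d}{dt}\int_0^1\sqrt{1+|u_x|^2}\,dx .
\end{equation*}
The first term equals $\sigma_\alpha(\alpha)\,\alpha_t\,|\Gamma_t|$, and substituting the misorientation equation $\alpha_t=-\gamma\,\sigma_\alpha(\alpha)\,|\Gamma_t|$, i.e.\ $\sigma_\alpha(\alpha)\,|\Gamma_t|=-\tfrac1\gamma\alpha_t$, turns it into $-\tfrac1\gamma|\alpha_t|^2$, which is precisely the first term on the right-hand side of \eqref{eq:2.14}.

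For the second term I would differentiate under the integral sign (legitimate since $u$ is a classical, hence sufficiently regular, solution) to obtain $\int_0^1 u_x u_{xt}/\sqrt{1+|u_x|^2}\,dx$, and then integrate by parts in $x$:
\begin{equation*}
 \frac{d}{dt}\int_0^1\sqrt{1+|u_x|^2}\,dx
 =\left[\frac{u_x u_t}{\sqrt{1+|u_x|^2}}\right]_{x=0}^{x=1}
 -\int_0^1\left(\frac{u_x}{\sqrt{1+|u_x|^2}}\right)_x u_t\,dx .
\end{equation*}
The boundary term vanishes because the periodic boundary conditions $u(0,t)=u(1,t)$ and $u_x(0,t)=u_x(1,t)$ force $u_t(0,t)=u_t(1,t)$ as well. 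It then remains to use the curve shortening equation $\frac{u_t}{\sqrt{1+|u_x|^2}}=\mu\sigma(\alpha)\bigl(\frac{u_x}{\sqrt{1+|u_x|^2}}\bigr)_x$, that is $\sigma(\alpha)\bigl(\frac{u_x}{\sqrt{1+|u_x|^2}}\bigr)_x=\tfrac1\mu\,\frac{u_t}{\sqrt{1+|u_x|^2}}$, so that $\sigma(\alpha)$ times the last integral becomes $\tfrac1\mu\int_0^1 u_t^2/\sqrt{1+|u_x|^2}\,dx=\tfrac1\mu\int_0^1\bigl(u_t/\sqrt{1+|u_x|^2}\bigr)^2\sqrt{1+|u_x|^2}\,dx$. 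This is the negative of the second term in \eqref{eq:2.14}, and adding the two contributions yields \eqref{eq:2.14}.

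There is no genuine obstacle here: the identity is a direct energy computation. The only points deserving a word of justification are the interchange of $\frac{d}{dt}$ with $\int_0^1$ and the integration by parts, both of which are immediate from the regularity of a solution and from periodicity. Alternatively, one can simply note that \eqref{eq:2.14} is the graphical specialization of the general dissipation identity \eqref{eq:2.21} already established for $\Gamma_t$, under the correspondences $v_n=u_t/\sqrt{1+|u_x|^2}$ and $d\mathscr{H}^1=\sqrt{1+|u_x|^2}\,dx$.
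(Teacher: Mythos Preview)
Your proposal is correct and follows essentially the same approach as the paper: product rule on $E(t)=\sigma(\alpha)|\Gamma_t|$, differentiation under the integral and integration by parts on the length term, then substitution of the two evolution equations in \eqref{eq:2.13}. You merely spell out the boundary-term vanishing and the regularity justifications that the paper leaves implicit.
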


\begin{proof}
 By direct calculation, we obtain
 \begin{equation}
  \begin{split}
   \frac{dE}{dt}
   &=
   \sigma_\alpha\alpha_t|\Gamma_t|
   +
   \sigma
   \int_0^1
   \frac{u_xu_{xt}}{\sqrt{1+|u_x|^2}}\,dx \\
   &=
   \sigma_\alpha\alpha_t|\Gamma_t|
   -
   \sigma
   \int_0^1
   \left(
   \frac{u_x}{\sqrt{1+|u_x|^2}}
   \right)_x
   u_t
   \,dx \\
   &=
   -\frac1\gamma
   |\alpha_t|^2
   -\frac1\mu
   \int_0^1
   \left(
   \frac{u_t}{\sqrt{1+|u_x|^2}}
   \right)^2
   \sqrt{1+|u_x|^2}\,dx.
  \end{split}
 \end{equation}
\end{proof}

Hereafter, we make two assumptions, first being that the energy density
is strictly positive, namely there exists a positive constant
$\Cl{const:1.1}>0$ such that
\begin{equation}
 \label{eq:1.A1}
  \tag{A1}
  \sigma(\alpha)\geq \Cr{const:1.1}
\end{equation}
for all $\alpha\in\R$. The second is that for $\alpha\in\R$
\begin{equation}
 \label{eq:1.A2}
  \tag{A2}
  \alpha\sigma_\alpha(\alpha)\geq 0.
\end{equation}

\begin{example}
 \label{example:2.3}
 When we consider $\sigma(\alpha)=1+\frac12\alpha^2$, then
 $\Cr{const:1.1}=1$ and we obtain equations:
 \begin{equation*}
  \left\{
   \begin{aligned}
    \frac{u_t}{\sqrt{1+|u_x|^2}}
    &=
    \mu
    \left(
    1+\frac12\alpha^2(t)
    \right)
    \left(
    \frac{u_x}{\sqrt{1+|u_x|^2}}
    \right)_x,&
    \quad
    &x\in(0,1),\ t>0,\\
   \alpha_t
    &=
    -
   \gamma
   \alpha(t)
   |\Gamma_t|, &
   \quad
   &t>0.
   \end{aligned}
 \right.
 \end{equation*}
 For example, $(u,\alpha)=(c_1, c_2 e^{-\gamma t})$ is an explicit solution for any constants $c_1$ and $c_2$.
\end{example}

\section{Weighted monotonicity formula}

Next, we derive a weighted monotonicity formula for \eqref{eq:2.13},
which is useful for gradient estimates. In order to obtain the formula,
we describe the backward heat kernel with time dependent thermal
conductivities and its properties.

\subsection{Backward heat kernels with time-dependent thermal conductivities}
From \eqref{eq:2.12}, we have to consider the fundamental solution of
the heat equation with a time-dependent thermal conductivity. Let us
study
\begin{equation}
 \label{eq:3.1}
 \frac{\partial u}{\partial t}(x,t)
  =
  k'(t)\Delta u(x,t)\quad
  x\in\R^d,\ t>0,
\end{equation}
where $k(t)$ denotes the given thermal conductivity depending on
$t>0$. Taking a change of variable $s=k(t)$, we obtain
\begin{equation*}
 \frac{\partial u}{\partial s}(x,s)
  =
  \Delta u(x,s)\quad
  x\in\R^d,\ s>0.
\end{equation*}
Thus, the fundamental solution of \eqref{eq:3.1} is given by
\begin{equation}
 \frac{1}{(4\pi s)^{d/2}}
  \exp\left(-\frac{|x|^2}{4s}\right)
  =
  \frac{1}{(4\pi k(t))^{d/2}}
  \exp\left(-\frac{|x|^2}{4k(t)}\right).
\end{equation}

Let $k'(t)=\mu\sigma(\alpha(t))$; note that $k'>\mu \Cr{const:1.1}$
by \eqref{eq:1.A1}. For $X_0\in \R^2$ and $t_0>0$, we define the
backward heat kernel $\rho=\rho_{(X_0,t_0)}$ as
\begin{equation}
 \label{eq:3.3}
 \rho(X,t)
  =
  \frac{1}{(4\pi (k(t_0)-k(t)))^{\frac{1}{2}}}
  \exp
  \left(
   -\frac{|X-X_0|^2}{4(k(t_0)-k(t))}
  \right)
  ,\quad
  0<t<t_0,\quad X\in\R^2.
\end{equation}
Then, by direct calculation we get
\begin{equation}
 \begin{split}
  \rho_t
  &=
  \frac{k'(t)}{2(k(t_0)-k(t))}\rho
  -
  \frac{k'(t)|X-X_0|^2}{4(k(t_0)-k(t))^2}\rho, \\
  D\rho
  &=
  -
  \frac{\rho}{2(k(t_0)-k(t))}(X-X_0), \\
  D^2\rho
  &=
  -
  \frac{\rho}{2(k(t_0)-k(t))}I
  +
  \frac{\rho}{4(k(t_0)-k(t))^2}(X-X_0)\otimes(X-X_0),
 \end{split}
\end{equation}
where $X\otimes Y=(x_iy_j)_{1\leq i,j\leq2}$ for $X=(x_1,x_2)$,
$Y=(y_1,y_2)\in\R^2$. Therefore we obtain
\begin{equation}
 \label{eq:3.2}
 \begin{split}
  &\quad
  \rho_t
  +
  \mu\sigma(\alpha(t))
  \frac{(D\rho\cdot\vec{a})^2}{\rho}
  +
  \mu\sigma(\alpha(t))
  ((I-\vec{a}\otimes\vec{a}):D^2\rho) \\
  &=
  \left(
  \frac{k'(t)}{2(k(t_0)-k(t))}\rho
  -
  \frac{k'(t)|X-X_0|^2}{4(k(t_0)-k(t))^2}\rho
  \right)
  +
  \frac{k'(t)\rho}{4(k(t_0)-k(t))^2}((X-X_0)\cdot\vec{a})^2 \\
  &\quad\!
  +
  k'(t)
  \left(
  -
  \frac{\rho}{2(k(t_0)-k(t))}
  +
  \frac{\rho}{4(k(t_0)-k(t))^2}|X-X_0|^2
  -
  \frac{\rho}{4(k(t_0)-k(t))^2}((X-X_0)\cdot\vec{a})^2
  \right) \\
  &=0,
 \end{split}
\end{equation}
for $\vec{a}\in\Sp^1$. We now use the backward heat kernel with
$k'(t)=\mu\sigma(\alpha(t))$ and $k(0)=0$, namely
\begin{equation}
 \label{eq:3.4}
 \rho(X,t)
  :=
  \frac{1}{(4\pi(\Sigma(t_0)-\Sigma(t)))^{\frac{1}{2}}}
  \exp
  \left(
   -\frac{|X-X_0|^2}{4(\Sigma(t_0)-\Sigma(t))}
  \right)
  ,\quad
  0<t<t_0,\quad X\in\R^2,
\end{equation}
where
\begin{equation}
 \Sigma(t)
  :=
  \mu\int_0^t\sigma(\alpha(\tau))\,d\tau.
\end{equation}

\subsection{Weighted monotonicity identity}

The monotonicity formula for the mean curvature flow was derived by
Huisken~\cite{MR1030675} to study asymptotics of blow-up profiles. Ecker
and Huisken~\cite{MR1025164} used the formula to show the existence for
the entire graph solutions. To the best of our knowledge, the
monotonicity formula for the curve shortening flow with variable
mobilities is not known. We derive the weighted monotonicity identity in
a similar manner to \cite[Theorem 4.13]{MR2024995}. The key observation
in deriving the identity is the usefulness of the energy dissipation
\eqref{eq:2.14}.

A continuously differentiable function
$f:=f(x,y,t):[0,1]\times\R\times[0,\infty)\rightarrow\R$ is called
admissible if $f(0,y,t)=f(1,y,t)$ and $f_x(0,y,t)=f_x(1,y,t)$ for $y\in
\R$ and $t\geq0$. From now on, for a solution $u$ of \eqref{eq:2.13}, let
$\vec{n}=\frac{1}{\sqrt{1+|u_x|^2}}(-u_x,1)$ be an upward unit normal
vector of $\Gamma_t$, $\kappa = (\frac{u_x}{\sqrt{1+|u_x|^2}})_x$ be the
curvature of $\Gamma_t$ and $\vec{\kappa} = \kappa \vec{n}$ be the curvature vector of
$\Gamma_t$.

\begin{theorem}
 \label{thm:3.1}
 Let $(u,\alpha)$ be a solution of \eqref{eq:2.13}. Then for any $X_0\in
 \R^2$, $t_0>0$, and for any admissible
 $f:[0,1]\times\R\times[0,\infty)\rightarrow\R$,
 \begin{multline}
  \label{eq:3.5}
  \frac{d}{dt}
  \int_{\Gamma_t}
  f\rho\sigma(\alpha(t))
  =\int_{\Gamma_t}
  (f_t-\mu\sigma(\alpha(t))\Delta_{\Gamma_t}f
  +
  \mu\sigma(\alpha(t))(Df\cdot \vec{\kappa}))\rho\sigma(\alpha(t)) \\
  -
  \mu\sigma(\alpha(t))
  \int_{\Gamma_t}
  \left(
  f\rho
  \left(
  -\kappa+\frac{D\rho\cdot\vec{n}}{\rho}
  \right)^2
  \sigma(\alpha(t))
  \right)
  -
  \frac1{\gamma|\Gamma_t|}
  \int_{\Gamma_t}
  f\rho\alpha_t^2 ,
 \end{multline}
 where $\rho=\rho_{(X_0,t_0)}$ is given by \eqref{eq:3.4}.
\end{theorem}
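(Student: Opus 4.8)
The plan is to follow the Huisken--Ecker derivation of the monotonicity formula (cf.\ \cite[Theorem~4.13]{MR2024995}), the only genuinely new inputs being the pointwise kernel identity \eqref{eq:3.2}, which already encodes the time-dependent mobility through $\Sigma$, and the equation for $\alpha$ in \eqref{eq:2.13}. Since $\alpha$ is independent of the position, I would first write $\int_{\Gamma_t}f\rho\sigma(\alpha(t))=\sigma(\alpha(t))\int_{\Gamma_t}f\rho$ and differentiate as a product:
\[
 \frac{d}{dt}\int_{\Gamma_t}f\rho\sigma(\alpha)
 =\sigma_\alpha(\alpha)\alpha_t\int_{\Gamma_t}f\rho
 +\sigma(\alpha)\,\frac{d}{dt}\int_{\Gamma_t}f\rho .
\]
The first summand is already the last term of \eqref{eq:3.5}: the equation for $\alpha$ in \eqref{eq:2.13} gives $\sigma_\alpha(\alpha)=-\alpha_t/(\gamma|\Gamma_t|)$, whence $\sigma_\alpha(\alpha)\alpha_t\int_{\Gamma_t}f\rho=-\frac1{\gamma|\Gamma_t|}\int_{\Gamma_t}f\rho\,\alpha_t^2$, which is exactly the dissipative structure underlying \eqref{eq:2.14}.

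For the second summand I would use the transport identity for the closed evolving curve $\Gamma_t=\{(x,u(x,t)):x\in\mathbb{T}\}$. Parametrizing over $\mathbb{T}$ and differentiating $\int_0^1(f\rho)(\vec\xi(x,t),t)\,|\vec\xi_x|\,dx$, the contribution of $\partial_t|\vec\xi_x|$ is integrated by parts in $x$; the boundary terms vanish because $f$ is admissible and $\vec\xi,\vec\xi_x$ are $1$-periodic, hence so are the restriction of $f\rho$ to $\Gamma_t$ and its $x$-derivative. This eliminates the tangential part of the velocity $\vec\xi_t=(0,u_t)$ and leaves
\[
 \frac{d}{dt}\int_{\Gamma_t}f\rho
 =\int_{\Gamma_t}\Big(f_t\rho+f\rho_t+v_n\,D(f\rho)\cdot\vec n-v_n\kappa\,f\rho\Big),\qquad
 v_n=\vec\xi_t\cdot\vec n=\mu\sigma(\alpha)\kappa .
\]
Writing $D(f\rho)\cdot\vec n=(Df\cdot\vec n)\rho+f(D\rho\cdot\vec n)$ and using $Df\cdot\vec\kappa=\kappa\,(Df\cdot\vec n)$, the terms involving $Df$ reproduce, after multiplication by $\sigma(\alpha)$, the $f_t$ and $\mu\sigma(\alpha)(Df\cdot\vec\kappa)$ contributions inside the first integral of \eqref{eq:3.5}; it then remains to match the $\rho_t$-, $(D\rho\cdot\vec n)$- and $\kappa^2$-terms with the $-\mu\sigma(\alpha)\Delta_{\Gamma_t}f$ term and with the square.

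For this I would invoke \eqref{eq:3.2} with $\vec a=\vec n$. Using $I-\vec n\otimes\vec n=\vec\tau\otimes\vec\tau$ for the unit tangent $\vec\tau$, so that $(I-\vec n\otimes\vec n):D^2\rho=D^2\rho(\vec\tau,\vec\tau)$, it reads $\rho_t=-\mu\sigma(\alpha)\big(\tfrac{(D\rho\cdot\vec n)^2}{\rho}+D^2\rho(\vec\tau,\vec\tau)\big)$ — this is precisely where the kernel \eqref{eq:3.4} built from $\Sigma(t)=\mu\int_0^t\sigma(\alpha)$ does its job. Next, the Frenet formula gives $D^2\rho(\vec\tau,\vec\tau)=\Delta_{\Gamma_t}\rho-\kappa\,(D\rho\cdot\vec n)$, and Green's identity on the closed curve, $\int_{\Gamma_t}f\,\Delta_{\Gamma_t}\rho=-\int_{\Gamma_t}\nabla_{\Gamma_t}f\cdot\nabla_{\Gamma_t}\rho=\int_{\Gamma_t}\rho\,\Delta_{\Gamma_t}f$ (which is also how the term $\Delta_{\Gamma_t}f$ is to be read, $f$ being only $C^1$), converts $\int f\,\Delta_{\Gamma_t}\rho$ into $\int\rho\,\Delta_{\Gamma_t}f$. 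Collecting everything and completing the square, $\kappa^2-2\kappa\tfrac{D\rho\cdot\vec n}{\rho}+\tfrac{(D\rho\cdot\vec n)^2}{\rho^2}=\big(-\kappa+\tfrac{D\rho\cdot\vec n}{\rho}\big)^2$, yields precisely the middle term of \eqref{eq:3.5}.

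The argument is mostly bookkeeping; the step requiring the most care is the transport identity — arranging the first variation of the line element so that the tangential component of $(0,u_t)$ drops out (which is why closedness/periodicity is essential) and keeping the orientation conventions for $\kappa$, $\vec n$, $\vec\kappa$ consistent with \eqref{eq:2.13}, so that the square term and the $Df\cdot\vec\kappa$ term emerge with the signs stated in \eqref{eq:3.5}. The regularity needed to differentiate under the integral sign and to integrate by parts is assumed throughout for the classical solution $(u,\alpha)$.
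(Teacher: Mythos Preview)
Your proposal is correct and follows essentially the same strategy as the paper: the kernel identity \eqref{eq:3.2}, the first-variation/transport formula, the equation for $\alpha$, completion of the square, and integration by parts to convert $\int_{\Gamma_t}f\,\Delta_{\Gamma_t}\rho$ into $\int_{\Gamma_t}\rho\,\Delta_{\Gamma_t}f$ are exactly the ingredients the paper uses. The only cosmetic difference is that the paper carries out the transport step in explicit graph coordinates (splitting into $I_1,\dots,I_4$ and computing $\partial_t|\vec\xi_x|$ and the chain-rule terms for $f,\rho$ by hand), whereas you invoke the geometric transport identity directly; the paper itself notes in the Remark following the proof that this identity \eqref{eq:3.10} is the key relation, so your more intrinsic presentation is the one the authors point to as the conceptual backbone.
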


\begin{proof}
 We first calculate
 \begin{equation}
  \begin{split}
   \frac{d}{dt}
   \int_{\Gamma_t}
   f\rho\sigma
   &=
   \int_{\Gamma_t}
   \frac{\partial}{\partial t}f\rho\sigma
   +
   \int_{\Gamma_t}
   f\frac{\partial}{\partial t}\rho\sigma
   +
   \int_{\Gamma_t}
   f\rho\sigma_\alpha\alpha_t
   +
   \int_0^1
   f\rho\sigma\frac{u_xu_{xt}}{\sqrt{1+|u_x|^2}}\,dx \\
   &=:I_1+I_2+I_3+I_4.
  \end{split}
 \end{equation}
 By integration by parts, $I_4$ is transformed into
 \begin{equation}
 \begin{split}
  I_4
  &=
  -
  \int_0^1
  \left(
   f(x,u,t)\rho(x,u,t)
  \sigma(\alpha(t))\frac{u_x}{\sqrt{1+|u_x|^2}}
  \right)_xu_t
  \,dx \\
  &=
  -
  \int_{\Gamma_t}
  \frac{\partial}{\partial x}f\rho\sigma\frac{u_x}{\sqrt{1+|u_x|^2}}
  \frac{u_t}{\sqrt{1+|u_x|^2}} \\
  &\quad
  -
  \int_{\Gamma_t}
  f\frac{\partial}{\partial x}\rho\sigma\frac{u_x}{\sqrt{1+|u_x|^2}}
  \frac{u_t}{\sqrt{1+|u_x|^2}} \\
  &\quad
  -
  \int_{\Gamma_t}
  f\rho\sigma\left(\frac{u_x}{\sqrt{1+|u_x|^2}}\right)_x
  \frac{u_t}{\sqrt{1+|u_x|^2}}.
 \end{split}
 \end{equation}
 By direct calculation of the backward heat kernel $\rho$, we have
 \begin{equation}
  \begin{split}
   \frac{\partial}{\partial t}\rho
   -
   \frac{\partial}{\partial x}\rho
   \frac{u_x}{\sqrt{1+|u_x|^2}}
   \frac{u_t}{\sqrt{1+|u_x|^2}}
   &=
   \rho_t
   +
   \rho_yu_t
   -
   (\rho_x+\rho_yu_x)
   \frac{u_x}{\sqrt{1+|u_x|^2}}
   \frac{u_t}{\sqrt{1+|u_x|^2}} \\
   &=
   \rho_t
   +
   \left(
   -\rho_x
   \frac{u_x}{\sqrt{1+|u_x|^2}}
   +\rho_y
   \frac{1}{\sqrt{1+|u_x|^2}}
   \right)
   \frac{u_t}{\sqrt{1+|u_x|^2}} \\
   &=
   \rho_t
   +
   (D\rho\cdot\vec{n})
   \frac{u_t}{\sqrt{1+|u_x|^2}},
  \end{split}
 \end{equation}
 where $\vec{n}=\frac{1}{\sqrt{1+|u_x|^2}}(-u_x,1)$.
 Similarly,
 \begin{equation}
  \frac{\partial}{\partial t}f
   -\frac{\partial}{\partial x}f
   \frac{u_x}{\sqrt{1+|u_x|^2}}
   \frac{u_t}{\sqrt{1+|u_x|^2}}
   =
   f_t
   +
   (Df\cdot\vec{n})
   \frac{u_t}{\sqrt{1+|u_x|^2}}.
 \end{equation}
 Therefore
 \begin{equation}
  \begin{split}
   I_1+I_2+I_3+I_4
   &=
   \int_{\Gamma_t}
   \left(
   f_t+(Df\cdot\vec{n})\frac{u_t}{\sqrt{1+|u_x|^2}}
   \right)\rho\sigma \\
   &\quad
   +\int_{\Gamma_t}
   f\left(
   \rho_t
   +
   \left(
   (D\rho\cdot\vec{n})
   -\rho
   \left(
   \frac{u_x}{\sqrt{1+|u_x|^2}}
   \right)_x
   \right)
   \frac{u_t}{\sqrt{1+|u_x|^2}}
   \right)\sigma \\
   &\quad
   +
   \int_{\Gamma_t}
   f\rho\sigma_\alpha\alpha_t.
  \end{split}
 \end{equation}

 Next, by equation \eqref{eq:2.13},
 \begin{equation}
  \begin{split}
   &\quad
   \left(
   (D\rho\cdot\vec{n})
   -
   \rho
   \left(
   \frac{u_x}{\sqrt{1+|u_x|^2}}
   \right)_x
   \right)
   \frac{u_t}{\sqrt{1+|u_x|^2}} \\
   &=
   \left(
   (D\rho\cdot\vec{n})
   -
   \rho \kappa
   \right)
   \cdot \mu\sigma \kappa \\
   &=
   -\mu \sigma\rho
   \left(
   \kappa^2
   -
   \frac{(D\rho\cdot\vec{n})}{\rho}
   \kappa
   \right)
   \\
   &=
   -\mu\sigma
  \left(
  \rho\left(
  -\kappa
  +
  \frac{(D\rho\cdot\vec{n})}{\rho}
  \right)^2
  -
  \frac{(D\rho\cdot\vec{n})^2}{\rho}
  +
  (D\rho\cdot\vec{n})\kappa
  \right)
   \\
  &=
   -\mu\sigma
   \left(
   \rho\left(
   -\kappa
   +
   \frac{(D\rho\cdot\vec{n})}{\rho}
   \right)^2
   -
   \frac{(D\rho\cdot\vec{n})^2}{\rho}
   +
   (D\rho\cdot\vec{\kappa})
   \right),
  \end{split}
 \end{equation}
 and
 \begin{equation}
  (Df\cdot\vec{n})\frac{u_t}{\sqrt{1+|u_x|^2}}
   =
   \mu\sigma(Df\cdot\vec{\kappa}).
 \end{equation}
 Again, we use equation \eqref{eq:2.13} and
 \begin{equation}
  \begin{split}
   I_1+I_2+I_3+I_4
   &=
   \int_{\Gamma_t}
   (f_t+\mu\sigma(Df\cdot\vec{\kappa}))\rho\sigma
   +
   \int_{\Gamma_t}
   f
   \left(
   \rho_t
   +
   \mu\sigma
   \frac{(D\rho\cdot\vec{n})^2}{\rho}
   -
   \mu\sigma
   (D\rho\cdot\vec{\kappa})
   \right)
   \sigma \\
   &\quad
   -\mu\sigma
   \int_{\Gamma_t}
   f\rho\left(
   -\kappa
   +
   \frac{(D\rho\cdot\vec{n})}{\rho}
   \right)^2\sigma
   -\frac{1}{\gamma|\Gamma_t|}
   \int_{\Gamma_t}
   f\rho\alpha_t^2.
  \end{split}
 \end{equation}

 By Gauss' divergence formula and assumption $f(0,y,t)=f(1,y,t)=0$, we
 have
 \begin{equation*}
  \int_{\Gamma_t}
   \Div_{\Gamma_t}(fD\rho)
   =
   -
   \int_{\Gamma_t}
   f(D\rho\cdot\vec{\kappa}).
 \end{equation*}
 Here,
 \begin{equation}
  \begin{split}
   \Div_{\Gamma_t}(fD\rho)
   &=
   \frac{1}{\sqrt{1+|u_x|^2}}\frac{\partial}{\partial x}
   (f(x,u,t)(\rho_x(x,u,t),\rho_y(x,u,t)))
   \cdot
   \frac{(1,u_x)}{\sqrt{1+|u_x|^2}} \\
   &=
   \frac{f}{1+|u_x|^2}
   \left(
   \begin{pmatrix}
    1 & u_x \\
    u_x & |u_x|^2
   \end{pmatrix}
   :
   D^2\rho
   \right)
   +
   \frac{1}{\sqrt{1+|u_x|^2}}(\rho_x+\rho_yu_x)
   \left(\frac{1}{\sqrt{1+|u_x|^2}}\frac{\partial}{\partial x}\right)f \\
   &=
   f(I-\vec{n}\otimes\vec{n}):D^2\rho
   +
   \left(\frac{1}{\sqrt{1+|u_x|^2}}\frac{\partial}{\partial x}\right)\rho
   \left(\frac{1}{\sqrt{1+|u_x|^2}}\frac{\partial}{\partial x}\right)f.
  \end{split}
 \end{equation}
 With $f$ admissible, we obtain by integration by parts
 \begin{equation}
  \begin{split}
   \int_{\Gamma_t}
   \left(\frac{1}{\sqrt{1+|u_x|^2}}\frac{\partial}{\partial x}\rho\right)
   \left(\frac{1}{\sqrt{1+|u_x|^2}}\frac{\partial}{\partial x}f\right)
   &=
   \int_{0}^1
   \frac{\partial}{\partial x}\rho
   \left(\frac{1}{\sqrt{1+|u_x|^2}}\frac{\partial}{\partial x}f\right)
   \,dx \\
   &=
   -\int_{0}^1
   \rho
   \left(
   \frac{1}{\sqrt{1+|u_x|^2}}\frac{\partial}{\partial x}f
   \right)_x
   \,dx \\
   &=
   -\int_{\Gamma_t}
   \rho\Delta_{\Gamma_t}f.
  \end{split}
 \end{equation}
 Therefore, by \eqref{eq:3.2} we obtain
 \begin{equation}
  \begin{split}
   \frac{d}{dt}
   \int_{\Gamma_t}
   f\rho\sigma
   &=
   \int_{\Gamma_t}
   (f_t
   -
   \mu\sigma\Delta_{\Gamma_t}f
   +
   \mu\sigma(Df\cdot\vec{\kappa}))
   \rho\sigma \\
   &\quad
   -\mu\sigma
   \int_{\Gamma_t}
   f\rho\left(
   -\kappa
   +
   \frac{(D\rho\cdot\vec{n})}{\rho}
   \right)^2\sigma
   -\frac{1}{\gamma|\Gamma_t|}
   \int_{\Gamma_t}
   f\rho\alpha_t^2.
  \end{split}
 \end{equation}
\end{proof}


\begin{remark}
 Equality \eqref{eq:3.5} also holds when $\Gamma_t$ is not a graph.
 A key relation in proving \eqref{eq:3.5} is
 \begin{equation}
  \label{eq:3.10}
  \frac{d}{dt} \int_{\Gamma_t} F
   =
   \int_{\Gamma_t} \{ (\nabla F - F \vec{\kappa}) \cdot \vec{v}_n + F_t \}.
 \end{equation}
 for any smooth function $F: \mathbb{R}^2 \times [0,\infty) \to
 \mathbb{R}$, where $\vec{v}_n$ and $\vec{\kappa}$ denote the normal
 velocity vector and the curvature vector of $\Gamma_t$,
 respectively. Indeed, the relation \eqref{eq:3.10} also holds for a
 smooth Jordan curve $\Gamma_t$ (see \cite[Proposition 4.6 and Theorem
 4.13]{MR2024995}).
\end{remark}

On the proof of Theorem \ref{thm:3.1}, we only use the smoothness of the
energy density $\sigma$. If we assume the positivity~\eqref{eq:1.A1} and
the non-negativity of the admissible function $f$, we obtain the
weighted monotonicity formula.

\begin{corollary}
 Let $(u,\alpha)$ be a solution of \eqref{eq:2.13} and let
 $f:[0,1]\times\R\times[0,\infty)\rightarrow[0,\infty)$ be a
 non-negative admissible function. Then, under assumption
 \eqref{eq:1.A1}, we obtain
 \begin{equation}
  \label{eq:3.6}
  \frac{d}{dt}
  \int_{\Gamma_t}
  f\rho\sigma(\alpha(t))
  \leq
  \int_{\Gamma_t}
  (f_t-\mu\sigma(\alpha(t))\Delta_{\Gamma_t}f
  +
  \mu\sigma(\alpha(t))(Df\cdot\vec{\kappa}))\rho\sigma(\alpha(t)),
 \end{equation}
 where $\rho=\rho_{(X_0,t_0)}$ is given by \eqref{eq:3.4}.
\end{corollary}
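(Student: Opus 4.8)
The plan is to obtain \eqref{eq:3.6} by simply discarding the manifestly non-positive terms in the monotonicity identity \eqref{eq:3.5} of Theorem \ref{thm:3.1}. Indeed, reading off \eqref{eq:3.5}, its right-hand side equals the right-hand side of \eqref{eq:3.6} plus the two extra contributions
\begin{equation*}
 -\mu\sigma(\alpha(t))
 \int_{\Gamma_t}
 f\rho
 \left(
 -\kappa+\frac{D\rho\cdot\vec{n}}{\rho}
 \right)^2
 \sigma(\alpha(t))
 \qquad\text{and}\qquad
 -\frac{1}{\gamma|\Gamma_t|}
 \int_{\Gamma_t}
 f\rho\alpha_t^2 ,
\end{equation*}
so the whole task reduces to checking that each of these is $\leq0$ and then dropping it.

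For the first term I would note that $\mu>0$, that $\sigma(\alpha(t))\geq\Cr{const:1.1}>0$ by \eqref{eq:1.A1}, that $f\geq0$ by hypothesis, that the backward kernel $\rho=\rho_{(X_0,t_0)}$ from \eqref{eq:3.4} is strictly positive on $\R^2\times(0,t_0)$, and that $\big(-\kappa+D\rho\cdot\vec{n}/\rho\big)^2\geq0$ pointwise; hence the integrand is a product of non-negative factors and the term is $\leq0$. For the second term, $\gamma>0$, the curve length $|\Gamma_t|>0$ (the graph over $\mathbb{T}$ has length at least $1$), $f\geq0$, $\rho>0$, and $\alpha_t^2\geq0$, so it too is $\leq0$. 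Dropping both from \eqref{eq:3.5} then gives \eqref{eq:3.6}.

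There is no genuine obstacle here; the only points worth a sentence are that $\rho$ is well defined and strictly positive — which is exactly where \eqref{eq:1.A1} is used, since it forces $\Sigma(t_0)-\Sigma(t)=\mu\int_t^{t_0}\sigma(\alpha(\tau))\,d\tau>0$ for $0<t<t_0$ — and the positivity of $|\Gamma_t|$, which is immediate for a graph over $\mathbb{T}$.
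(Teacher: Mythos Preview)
Your proposal is correct and matches the paper's own approach: the corollary is obtained directly from the identity \eqref{eq:3.5} by dropping the two manifestly non-positive terms, using $f\geq0$, $\rho>0$, and \eqref{eq:1.A1}. The paper does not even write out a separate proof, merely remarking that positivity of $\sigma$ and non-negativity of $f$ turn the identity into the inequality.
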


\section{Gradient estimates and existence of solutions}

In this section, we first obtain the a priori gradient estimates by
applying the area element $\sqrt{1+|u_x|^2}$ to the admissible function
in the weighted monotonicity formula, obtained in previous section. Note
that the area element is the non-negative admissible function and the
integrand of the right hand side of \eqref{eq:3.6} is non-positive.
Next, we prove the existence of classical solutions for
\eqref{eq:2.13} from the a priori gradient estimates.

\begin{lemma}
 \label{lem:4.1}
 Let $(u,\alpha)$ be a solution of \eqref{eq:2.13} and let
 $v:=\sqrt{1+|u_x|^2}$. Then
 \begin{equation}
  \label{eq:4.2}
  v_t
   -
   \mu\sigma\Delta_{\Gamma_t}v
   +
   \mu\sigma(Dv\cdot\vec{\kappa})
   =
   -
   \mu\sigma v\kappa^2
   -
   2\mu\sigma
   \frac{v_x^2}{v^3}.
 \end{equation}
\end{lemma}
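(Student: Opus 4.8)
The plan is to compute the parabolic operator applied to $v=\sqrt{1+|u_x|^2}$ directly, using the PDE $u_t=\mu\sigma\,v\,\kappa$ (equivalently $u_t/v = \mu\sigma\kappa$) from \eqref{eq:2.13}. First I would record the basic identities for the graph: the surface gradient $D$ of a function $g(x,u(x,t))$ restricted to $\Gamma_t$ satisfies $\frac{1}{v}\partial_x g = \hat{\vec b}\cdot Dg$ where $\hat{\vec b}=\frac{1}{v}(1,u_x)$, the Laplace--Beltrami operator is $\Delta_{\Gamma_t}g = \frac{1}{v}\partial_x\!\big(\frac{1}{v}\partial_x g\big)$, and $\vec\kappa = \kappa\vec n$ with $\kappa = \big(\frac{u_x}{v}\big)_x$. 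Then I would differentiate $v = (1+u_x^2)^{1/2}$ in $t$ to get $v_t = \frac{u_x u_{xt}}{v}$, and express $u_{xt}=\partial_x(\mu\sigma v\kappa)$ (noting $\sigma=\sigma(\alpha(t))$ is independent of $x$).

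Next I would handle the spatial terms. The key observation is that $Dv$ as a function on $\Gamma_t$ only enters through $\partial_x v = \frac{u_x u_{xx}}{v}$, and similarly $\Delta_{\Gamma_t}v$ is a purely one-dimensional expression once we write $v$ as a function of $x$. A clean route is to use the well-known geometric identity (essentially the Jacobi/Simons-type computation for graphs) that for the graphical curve shortening flow the quantity $v$ satisfies
\begin{equation*}
 \partial_t v = \mu\sigma\Big(\Delta_{\Gamma_t}v - (Dv\cdot\vec\kappa) - v\kappa^2 - 2\frac{v_x^2}{v^3}\Big),
\end{equation*}
but since the paper wants a self-contained derivation, I would instead just expand everything in terms of $x$-derivatives of $u$. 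Concretely, substitute $u_t=\mu\sigma v\kappa$, compute $v_t=\frac{u_x}{v}\partial_x(\mu\sigma v\kappa) = \mu\sigma\frac{u_x}{v}(v\kappa)_x$, and separately compute $\mu\sigma\Delta_{\Gamma_t}v = \mu\sigma\frac{1}{v}\big(\frac{v_x}{v}\big)_x$ and $\mu\sigma(Dv\cdot\vec\kappa) = \mu\sigma\,\kappa\,(Dv\cdot\vec n)$, where $Dv\cdot\vec n$ must be expressed carefully since $v$ depends on $x$ alone — here $Dv$ is the ambient gradient of the extension, so $(Dv\cdot\vec n)$ needs the relation between $\partial_x v$ along the graph and the normal derivative. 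The factor $\mu\sigma$ pulls out of all three terms because $\sigma$ is $x$-independent, which is what makes the final right-hand side have the stated form.

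The main obstacle, and the step requiring care, is bookkeeping the difference between intrinsic ($x$-)derivatives of $v$ along $\Gamma_t$ and the ambient gradient $Dv$ appearing in $Dv\cdot\vec\kappa$: one must fix a consistent extension of $v$ off $\Gamma_t$ (or, better, observe that the combination $\Delta_{\Gamma_t}v - Dv\cdot\vec\kappa$ is exactly the "full" Laplacian of the extension restricted to the surface, which for a function of $x$ alone reduces to a manageable expression), and then verify the algebraic cancellation that produces precisely $-\mu\sigma v\kappa^2 - 2\mu\sigma v_x^2/v^3$. In practice I would just write $v$ as a function of $x$, use $\kappa = u_{xx}/v^3$ and $v_x = u_x u_{xx}/v$, expand $(v\kappa)_x$ and $(v_x/v)_x$ in terms of $u_{xx}$ and $u_{xxx}$, and check that the third-order terms $u_{xxx}$ cancel between $v_t$ and $\mu\sigma\Delta_{\Gamma_t}v$ (this cancellation is forced because $v_t - \mu\sigma\Delta_{\Gamma_t}v$ is a genuine lower-order quantity), leaving a polynomial in $u_x, u_{xx}$ that reorganizes into the claimed formula after grouping the $\kappa^2$ and $v_x^2/v^3$ pieces. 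Since the $\sigma$-dependence is entirely through the overall constant $\mu\sigma(\alpha(t))$ and $\sigma_\alpha\alpha_t$ does not appear (as $v$ involves no $\alpha$), the identity is formally identical to the constant-mobility case up to the factor $\mu\sigma$, so I would model the computation on Ecker--Huisken \cite{MR1025164} / Ecker \cite{MR2024995}.
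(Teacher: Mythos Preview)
Your approach is correct and essentially identical to the paper's: differentiate the equation in $x$ to obtain $v_t=\mu\sigma\bigl(\tfrac{u_xv_x}{v}\kappa+u_x\kappa_x\bigr)$, compute $\Delta_{\Gamma_t}v=\tfrac{2v_x^2}{v^3}+v\kappa^2+u_x\kappa_x$ and $Dv\cdot\vec\kappa=-\tfrac{u_xv_x}{v}\kappa$, and observe the cancellations. Your concern about extending $v$ off $\Gamma_t$ is overblown---in the paper's setup $v=v(x,t)$ is simply taken independent of $y$, so $Dv=(v_x,0)$ and $Dv\cdot\vec\kappa=v_x\cdot(-\kappa u_x/v)$ in one line; also, organizing the computation around $\kappa$ and $\kappa_x$ (as the paper does) rather than raw $u_{xx},u_{xxx}$ makes the cancellation of the third-order term transparent.
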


\begin{proof}
 Taking a derivative of \eqref{eq:2.13} with respect to $x$, we obtain
 \begin{equation*}
   u_{tx}
   =
   \mu
   \sigma(\alpha)
   \left(
    v_x \kappa
    +
    v\kappa_x
   \right).
 \end{equation*}
 Multiplying $u_x/v$ and using the relation $vv_t=u_xu_{xt}$, we have
 \begin{equation}
  \label{eq:4.3}
  v_t
   =
   \mu
   \sigma(\alpha)
   \left(
    \frac{u_xv_x}{v}\kappa
    +
    u_x \kappa_x
   \right).
 \end{equation}
 Next, we manipulate the curvature $\kappa$ as
 \begin{equation*}
  \kappa
   =
   \left(\frac{u_x}{v}\right)_x
   =
   \left(
    \frac{u_{xx}}{v}-\frac{u_x^2u_{xx}}{v^3}
   \right)
   =
   \frac{u_{xx}}{v^3}(v^2-u_x^2)
   =
   \frac{u_{xx}}{v^3}.
 \end{equation*}
 Let $\partial_{\Gamma_t}=\frac{1}{v}\partial_x$ be the derivative along
 $\Gamma_t$. Then,
 $\Delta_{\Gamma_t}=\partial_{\Gamma_t}^2$ and
 \begin{equation*}
  \partial_{\Gamma_t}v
   =
   \frac{1}{v}v_x
   =
   \frac{1}{v^2}u_xu_{xx}
   =
   v^2\frac{u_x}{v}\kappa.
 \end{equation*}
 Therefore
 \begin{equation}
  \label{eq:4.4}
   \begin{split}
    \Delta_{\Gamma_t}v
    =
    \frac1v
    (\partial_{\Gamma_t}v)_x
    &=
    \frac{2v_xu_x \kappa}{v}
    +
    v\kappa^2
    +
    u_x\kappa_x \\
    &=
    \frac{2v^2_x}{v^3}
    +
    v\kappa^2
    +
    u_x\kappa_x.
   \end{split}
\end{equation}
 Since
 \begin{equation}
  \label{eq:4.1}
   Dv\cdot\vec{\kappa}
   =
   v_x\left(-\kappa\frac{u_x}{v}\right),
 \end{equation}
 we obtain \eqref{eq:4.2} by direct substitution of \eqref{eq:4.3},
 \eqref{eq:4.4}, and \eqref{eq:4.1}.
\end{proof}


\begin{theorem}
 \label{thm:4.2}
 Let $(u,\alpha)$ be a solution of \eqref{eq:2.13} and let $v:=\sqrt{1+u_x^2}$.
 Assume \eqref{eq:1.A1}. Then, for all $0<x_0<1$ and
 $t_0>0$,
 \begin{equation}
 \label{eq:4.9}
  v(x_0,t_0)
   \leq
   \frac{\sigma(\alpha(0))}{\Cr{const:1.1}}
   \sup_{0<x<1}v^2(x,0).
 \end{equation}
\end{theorem}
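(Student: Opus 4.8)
The plan is to feed the area element into the weighted monotonicity formula. Set $v=\sqrt{1+u_x^2}$ and take the backward heat kernel $\rho=\rho_{(X_0,t_0)}$ of \eqref{eq:3.4} centred at $X_0=(x_0,u(x_0,t_0))\in\Gamma_{t_0}$. First I would check that $v$, regarded as the admissible function $f(x,y,t)=\sqrt{1+u_x(x,t)^2}$ (independent of $y$), is indeed admissible: viewing $u(\cdot,t)$ as a smooth function on $\mathbb{T}=\R/\Z$, the periodic boundary conditions give $v(0,\cdot,t)=v(1,\cdot,t)$ and, since $u_{xx}(0,t)=u_{xx}(1,t)$, also $v_x(0,\cdot,t)=v_x(1,\cdot,t)$. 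By Lemma \ref{lem:4.1} the integrand on the right-hand side of \eqref{eq:3.6} equals $\bigl(-\mu\sigma v\kappa^2-2\mu\sigma v_x^2/v^3\bigr)\rho\sigma\le 0$, so the corollary after Theorem \ref{thm:3.1} gives that $t\mapsto\int_{\Gamma_t}v\rho\sigma(\alpha(t))$ is non-increasing on $(0,t_0)$. Writing $\int_{\Gamma_t}v\rho\sigma=\sigma(\alpha(t))\int_0^1 v^2(x,t)\,\rho(x,u(x,t),t)\,dx$ (the extra factor $v$ coming from the area element), monotonicity yields $\lim_{t\to 0^+}\int_{\Gamma_t}v\rho\sigma\ \ge\ \liminf_{t\to t_0^-}\int_{\Gamma_t}v\rho\sigma$.

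Next I would evaluate the two limits. As $t\to 0^+$ we have $\Sigma(t_0)-\Sigma(t)\to\Sigma(t_0)>0$, so $\rho(\cdot,\cdot,0)$ is a bounded smooth Gaussian and continuity of the solution up to $t=0$ gives $\lim_{t\to 0^+}\int_{\Gamma_t}v\rho\sigma=\sigma(\alpha(0))\int_0^1 v^2(x,0)\,\rho(x,u_0(x),0)\,dx$. The limit $t\to t_0^-$ is the concentration step and the crux of the argument: with $\varepsilon(t):=\Sigma(t_0)-\Sigma(t)\ge\mu\Cr{const:1.1}(t_0-t)\to 0$, the kernel $\rho(\cdot,\cdot,t)$ concentrates at $X_0$. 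A standard localization to $\{|x-x_0|<\delta\}$ using the uniform $C^1$ control of $u$ near $(x_0,t_0)$ — so that $u(x,t)-u(x_0,t_0)=u_x(x_0,t)(x-x_0)+o(|x-x_0|)+O(t_0-t)$ while $v^2(x,t)$ is close to $v^2(x_0,t_0)=1+u_x(x_0,t_0)^2$ — together with the change of variables $x-x_0=\sqrt{\varepsilon(t)}\,s$ and the Gaussian identity $\int_{\R}(4\pi)^{-1/2}e^{-As^2/4}\,ds=A^{-1/2}$ with $A=1+u_x(x_0,t_0)^2$, leads to
\[
 \liminf_{t\to t_0^-}\int_0^1 v^2(x,t)\,\rho(x,u(x,t),t)\,dx\ \ge\ \frac{v^2(x_0,t_0)}{\sqrt{1+u_x(x_0,t_0)^2}}=v(x_0,t_0),
\]
and hence, since $\sigma(\alpha(t))\to\sigma(\alpha(t_0))$, $\liminf_{t\to t_0^-}\int_{\Gamma_t}v\rho\sigma\ge\sigma(\alpha(t_0))\,v(x_0,t_0)$.

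Finally I would bound the initial-time integral from above. Because $X_0=(x_0,u(x_0,t_0))$, one has $\rho(x,u_0(x),0)\le(4\pi\Sigma(t_0))^{-1/2}\exp\bigl(-(x-x_0)^2/(4\Sigma(t_0))\bigr)$, so $\int_0^1\rho(x,u_0(x),0)\,dx\le\int_{\R}(4\pi\Sigma(t_0))^{-1/2}e^{-s^2/(4\Sigma(t_0))}\,ds=1$, whence $\int_0^1 v^2(x,0)\rho(x,u_0(x),0)\,dx\le\sup_{0<x<1}v^2(x,0)$. Chaining the three facts gives $\sigma(\alpha(t_0))\,v(x_0,t_0)\le\sigma(\alpha(0))\sup_{0<x<1}v^2(x,0)$, and dividing by $\sigma(\alpha(t_0))\ge\Cr{const:1.1}$ via \eqref{eq:1.A1} produces \eqref{eq:4.9}. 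The only real obstacle is the concentration limit as $t\to t_0^-$: one must show the liminf is genuinely $\ge v(x_0,t_0)$ rather than merely nonnegative, which requires the localization above and the classical regularity of $(u,\alpha)$ near $(x_0,t_0)$; the remaining steps are bookkeeping. Note that \eqref{eq:1.A2} plays no role here.
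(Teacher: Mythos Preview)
Your proposal is correct and follows essentially the same route as the paper: choose $X_0=(x_0,u(x_0,t_0))$, apply the weighted monotonicity formula with $f=v$ together with Lemma~\ref{lem:4.1} to get $t\mapsto\int_{\Gamma_t}v\rho\sigma$ non-increasing, bound the $t=0$ integral by $\sigma(\alpha(0))\sup v^2(x,0)$ via the one-dimensional Gaussian bound on $\rho$, and pass to the limit $t\uparrow t_0$ before invoking \eqref{eq:1.A1}. The only difference is that the paper states the concentration step ``taking a limit $t\uparrow t_0$'' without further comment, whereas you spell out the localization and rescaling that justify $\liminf_{t\to t_0^-}\int_{\Gamma_t}v\rho\sigma\ge\sigma(\alpha(t_0))v(x_0,t_0)$; your observation that \eqref{eq:1.A2} is not used here is also accurate.
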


\begin{proof}
 Put $X_0=(x_0,u(x_0,t_0))$ and consider the backward heat kernel
 $\rho=\rho_{(X_0,t_0)}$. Then, Theorem \ref{thm:3.1} with $f=v$ and
 Lemma \ref{lem:4.1} imply
 \begin{equation}
  \frac{d}{dt}
   \int_{\Gamma_t}
   v\rho\sigma(\alpha(t))
   \leq
   -
   \int_{\Gamma_t}
   \left(
    \mu\sigma v\kappa^2
    +
    2\mu\sigma
    \frac{v_x^2}{v^3}
   \right)
   \rho\sigma(\alpha(t))
   \leq
   0
 \end{equation}
 for $0<t<t_0$. Here we use the non-negativity of $\sigma$. Thus
 \begin{equation}
  \label{eq:4.5}
  \begin{split}
   \int_{\Gamma_t}
   v(x,t)\rho(X,t)\sigma(\alpha(t))
   &\leq
   \int_{\Gamma_0}
   v(x,0)\rho(X,0)\sigma(\alpha(0)) \\
   &\leq
   \sigma(\alpha(0))
   \sup_{0<x<1}v(x,0)
   \int_0^1\rho((x,u(x,0)),0)v(x,0)\,dx \\
   &\leq
   \sigma(\alpha(0))
   \sup_{0<x<1}v^2(x,0).
  \end{split}
 \end{equation}
 Taking a limit $t\uparrow t_0$ on \eqref{eq:4.5} and Assumption
 \eqref{eq:1.A1}, we have
 \begin{equation}
  \Cr{const:1.1}v(x_0,t_0)
   \leq
   \sigma(\alpha(t_0))v(x_0,t_0)
   \leq
   \sigma(\alpha(0))
   \sup_{0<x<1}v^2(x,0).
 \end{equation}
\end{proof}

\begin{lemma}
 \label{lem:4.2} Let $(u,\alpha)$ be a solution of
 \eqref{eq:2.13}. Assume \eqref{eq:1.A2}. Then, for all $t_0>0$
 \begin{equation}
  \label{eq:4.6}
 |\alpha(t_0)|
   \leq
   |\alpha(0)|.
 \end{equation}
\end{lemma}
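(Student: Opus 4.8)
The plan is to estimate the scalar ODE for $\alpha$ directly by looking at the evolution of $\alpha^2$. From the second equation in \eqref{eq:2.13} we have $\alpha_t=-\gamma\sigma_\alpha(\alpha)|\Gamma_t|$, so differentiating $\alpha^2$ in time and substituting gives
\begin{equation*}
 \frac{d}{dt}\alpha(t)^2
 =
 2\alpha(t)\alpha_t(t)
 =
 -2\gamma\,\alpha(t)\sigma_\alpha(\alpha(t))\,|\Gamma_t|.
\end{equation*}
Since $\gamma>0$ and $|\Gamma_t|=\int_0^1\sqrt{1+u_x^2}\,dx\geq 1>0$ for a graphical solution, assumption \eqref{eq:1.A2} forces $\alpha\sigma_\alpha(\alpha)\geq 0$ and hence the right-hand side is $\leq 0$.

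Thus $t\mapsto\alpha(t)^2$ is non-increasing on $[0,\infty)$, and integrating from $0$ to $t_0$ yields $\alpha(t_0)^2\leq\alpha(0)^2$, which is exactly \eqref{eq:4.6}.

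There is essentially no obstacle here: the only points to be careful about are that $|\Gamma_t|$ is strictly positive and finite (immediate from the graphical representation and smoothness of the solution) and that the sign condition \eqref{eq:1.A2} is precisely what is needed to kill the sign of $\frac{d}{dt}\alpha^2$; the constant $\Cr{const:1.1}$ from \eqref{eq:1.A1} plays no role in this lemma.
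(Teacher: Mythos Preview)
Your proof is correct and matches the paper's argument essentially line for line: multiply the ODE for $\alpha$ by $\alpha$, use \eqref{eq:1.A2} together with $\gamma>0$ and $|\Gamma_t|\geq 0$ to see that $\tfrac{d}{dt}\alpha^2\leq 0$, and integrate. The observation $|\Gamma_t|\geq 1$ is true but not needed here; nonnegativity of $|\Gamma_t|$ already suffices.
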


\begin{proof}
 Multiplying the equation \eqref{eq:2.13} by $\alpha$ and using
 \eqref{eq:1.A2} imply
 \begin{equation}
  \frac12
   (\alpha^2)_t
   =
   -\gamma \alpha\sigma_\alpha(\alpha)
   |\Gamma_t|
   \leq
   0.
 \end{equation}
 Integrating the above inequality on $0\leq t\leq t_0$, we have
 \eqref{eq:4.6}.
\end{proof}



In a similar manner to the arguments in \cite{MR0762825}, the following
holds:
\begin{lemma}
 Let $(u,\alpha)$ be a solution of \eqref{eq:2.13}. Assume
 \eqref{eq:1.A1}. Then, for all $0<x_0<1$ and $t_0>0$,
 \begin{equation}
  \label{eq:4.8}
  |u(x,t)|
   \leq
   \sup_{0<x<1}|u(x,0)|.
 \end{equation}
\end{lemma}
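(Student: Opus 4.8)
The plan is to recast the first equation of \eqref{eq:2.13} in non-divergence form and then apply the classical parabolic maximum principle, exploiting that the spatial domain $\mathbb{T}=\R/\Z$ has no boundary and that $\sigma(\alpha(t))$ is positive and continuous in $t$. Since $\left(\frac{u_x}{\sqrt{1+|u_x|^2}}\right)_x=\frac{u_{xx}}{(1+u_x^2)^{3/2}}$, multiplying the first line of \eqref{eq:2.13} by $\sqrt{1+|u_x|^2}$ gives
\begin{equation*}
 u_t=a(x,t)\,u_{xx},\qquad a(x,t):=\frac{\mu\,\sigma(\alpha(t))}{1+u_x(x,t)^2}.
\end{equation*}
For a classical solution $(u,\alpha)$ on $\mathbb{T}\times[0,T]$ the misorientation $\alpha$ is $C^1$ in $t$ and $u_x$ is continuous, so by \eqref{eq:1.A1} the coefficient $a$ is continuous on the compact set $\mathbb{T}\times[0,T]$ and is bounded there above and below by positive constants; in particular the equation is uniformly parabolic and has no zeroth-order term, the constants being stationary solutions.

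Set $M:=\sup_{0<x<1}|u(x,0)|$, fix $T>0$ inside the interval of existence, and fix $\varepsilon>0$. Let $u_\varepsilon:=u-\varepsilon t$, so that $\partial_t u_\varepsilon=a\,\partial_{xx}u_\varepsilon-\varepsilon$ on $\mathbb{T}\times(0,T]$. By compactness $u_\varepsilon$ attains its maximum over $\mathbb{T}\times[0,T]$ at some point $(x_1,t_1)$. If $t_1\in(0,T]$, then because $\mathbb{T}$ has no boundary $x_1$ is an interior spatial critical point, so $\partial_x u_\varepsilon(x_1,t_1)=0$ and $\partial_{xx}u_\varepsilon(x_1,t_1)\le0$, while $\partial_t u_\varepsilon(x_1,t_1)\ge0$; therefore $0\le\partial_t u_\varepsilon(x_1,t_1)=a(x_1,t_1)\,\partial_{xx}u_\varepsilon(x_1,t_1)-\varepsilon\le-\varepsilon<0$, a contradiction. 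Hence $t_1=0$ and $u(x,t)-\varepsilon t\le\max_{x}u(x,0)\le M$ for all $(x,t)\in\mathbb{T}\times[0,T]$; letting $\varepsilon\downarrow0$ gives $u(x,t)\le M$. Running the identical argument with $-u$ (which satisfies $\partial_t(-u)=a\,\partial_{xx}(-u)$) gives $u(x,t)\ge-M$, and since $T$ was arbitrary this yields \eqref{eq:4.8}.

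I do not expect a genuine obstacle here; the only points requiring care are the passage to non-divergence form together with the verification of uniform parabolicity from the a priori regularity of the classical solution and \eqref{eq:1.A1}, and the observation that periodicity removes the spatial boundary so that a spatial extremum is genuinely an interior critical point. An alternative is to note that the constants $\pm M$ are stationary solutions of \eqref{eq:2.13} and invoke a comparison principle directly, but the perturbation by $\varepsilon t$ avoids appealing to a quasilinear comparison theorem and keeps the proof self-contained, in the spirit of \cite{MR0762825}.
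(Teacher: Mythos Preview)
Your proof is correct and follows essentially the same approach as the paper: both rewrite the equation in non-divergence form $u_t=\mu\sigma(\alpha)(1+u_x^2)^{-1}u_{xx}$ and run a classical parabolic maximum-principle argument via a perturbation that forces a strict inequality at an interior extremum. The only cosmetic difference is the choice of perturbation---you subtract $\varepsilon t$, whereas the paper adds a quadratic $\tfrac{M_1-M}{2}(x-x_0)^2$ in the spatial variable; your choice is arguably cleaner on $\mathbb{T}$ since it preserves periodicity.
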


\begin{proof}
 Let $M:=\sup_{0<x<1}u(x,0)$ and assume that there is a point $(x_0,t_0)\in
 (0,1)\times(0,\infty)$ such that $u$ takes maximum $M_1$, which is
 greater than $M$, at the point $(x_0,t_0)$. At this point, we have
 \begin{equation}
  u(x_0,t_0)=M_1>M,\quad
  u_x(x_0,t_0)=0,\quad
   u_{xx}(x_0,t_0)\leq0,\ \text{and}\
   u_t(x_0,t_0)\geq0.
 \end{equation}
 Let us define
 \begin{equation}
  w(x,t):=u(x,t)+\frac{M_1-M}{2}(x-x_0)^2.
 \end{equation}
 Then
 \begin{equation}
  w(x,0)=u(x,0)+\frac{M_1-M}{2}(x-x_0)^2
   \leq M+\frac{M_1-M}{2}
   < M_1,\quad
   w(x_0,t_0)=M_1.
 \end{equation}
 Therefore, the maximum point $(x_1,t_1)$ of $w$ is in the interior of
 $(0,1)\times(0,\infty)$. From equation \eqref{eq:2.13}, we obtain
 a differential inequality
 \begin{equation}
  \label{eq:4.7}
  w_t
   =
   u_t
   =
   \mu\sigma(\alpha(t))
   \frac{u_{xx}}{v^2}
   <
   \mu\sigma(\alpha(t))
   \frac{w_{xx}}{v^2}.
 \end{equation}
 At point $(x_1,t_1)$, the left hand side of \eqref{eq:4.7} is
 non-negative but the right hand side of \eqref{eq:4.7} is non-positive.
 This is a contradiction, and therefore, there is no interior point
 $(x_0,t_0)\in(0,1)\times(0,\infty)$ such that $u$ takes a maximum at
 $(x_0,t_0)$. Similarly, $u$ does not take minimum at any interior point
 of $(0,1)\times(0,\infty)$; thus we obtain \eqref{eq:4.8}.
\end{proof}

We recall $\mathbb{T} = \mathbb{R} /\mathbb{Z}$. Let $Q_T := \mathbb{T}
\times (0,T)$, and $Q_T ^\varepsilon := \mathbb{T} \times
(\varepsilon,T)$ be parabolic cylinders for
$0<\varepsilon<T<\infty$. Using the $L^\infty$-estimates and the
gradient estimates, we obtain the time global existence
theorem:

\begin{theorem}
 \label{thm:4.5}
 Assume that $u_0 $ is a Lipschitz function on $\mathbb{T}$ with a
 Lipschitz constant $M>0$, $\beta \in (0,1)$, $\alpha _0 \in \mathbb{R}$
 and $\sigma \in C^1 (\mathbb{R})$ satisfies \eqref{eq:1.A1} and
 \eqref{eq:1.A2}.  Moreover, there exists $L>0$ such that $|\sigma
 _\alpha (a) - \sigma_\alpha (b) | \leq L |a-b| $ for any $a,b \in
 \mathbb{R}$.  Then, for any $0<\varepsilon <T<\infty$, there exists a
 unique solution $(u,\alpha) \in C (\overline{Q_T} ) \cap C^{2,\beta}
 (Q_T ^\varepsilon) \times C([0,T)) \cap C^{1,1} ((\varepsilon ,T))$ of
 \eqref{eq:2.13} with $(u (\cdot ,0),\alpha(0)
 )=(u_0,\alpha_0)$. Furthermore, we have
  \begin{equation}
 \label{eq:4.27}
  \| u \|_{C^{2,\beta} (Q_T ^\varepsilon )} \leq \Cl{const:4.5},
 \end{equation}
 where $\Cr{const:4.5} >0$ depends only on $\gamma$, $\mu$, $\varepsilon $, $L$, $M$,
 $\Cr{const:1.1}$, and $\sigma(\alpha(0))$.
\end{theorem}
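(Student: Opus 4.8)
The plan is to deduce the theorem from the three a priori bounds already established — the $L^\infty$ bound \eqref{eq:4.8}, the gradient bound \eqref{eq:4.9}, and the misorientation bound \eqref{eq:4.6} — via a standard approximation and continuation scheme for quasilinear parabolic equations. It is convenient first to rewrite the first line of \eqref{eq:2.13} in non-divergence form $u_t=\mu\sigma(\alpha(t))\,u_{xx}/(1+u_x^2)$, equivalently $u_t=\mu\sigma(\alpha(t))(\arctan u_x)_x$. Since \eqref{eq:1.A2} forces $\sigma_\alpha(0)=0$ (by the sign of $\sigma_\alpha$ near $0$ and continuity), the hypothesis on $\sigma_\alpha$ gives $|\sigma_\alpha(\alpha)|\le L|\alpha|$, and $\sigma$ is nondecreasing away from $0$; hence whenever $|\alpha|\le|\alpha_0|$ one has $\Cr{const:1.1}\le\sigma(\alpha)\le\Lambda$ with $\Lambda=\Lambda(\sigma,\alpha_0)$. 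Together with a bound on $|u_x|$ this makes the equation uniformly parabolic, with coefficient $\mu\sigma(\alpha(t))/(1+u_x^2)$ bounded above and below.

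First, mollify $u_0$ to smooth periodic data $u_0^\delta$ with $\|u_{0,x}^\delta\|_\infty\le M$ and $u_0^\delta\to u_0$ uniformly, and solve \eqref{eq:2.13} for this data on a short interval by a contraction that decouples the system: for fixed $\alpha\in C([0,\tau])$ the first equation is uniformly parabolic quasilinear with a smooth time coefficient and classically solvable, and inserting the resulting length $|\Gamma_t|$ into the ODE $\alpha_t=-\gamma\sigma_\alpha(\alpha)|\Gamma_t|$ — well posed and stable since $\sigma_\alpha$ is Lipschitz — defines a map whose fixed point is a local smooth solution $(u^\delta,\alpha^\delta)$. On its interval of existence \eqref{eq:4.6} gives $|\alpha^\delta|\le|\alpha_0|$, hence $\Cr{const:1.1}\le\sigma(\alpha^\delta(t))\le\Lambda$; \eqref{eq:4.8} gives $\|u^\delta(\cdot,t)\|_\infty\le\|u_0\|_\infty$; and \eqref{eq:4.9} gives $\sqrt{1+(u_x^\delta)^2}\le\tfrac{\sigma(\alpha_0)}{\Cr{const:1.1}}(1+M^2)$. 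Thus $|\Gamma_t^\delta|=\int_0^1\sqrt{1+(u_x^\delta)^2}\,dx$ is bounded and $|\sigma_\alpha(\alpha^\delta)|\le L|\alpha_0|$, so the ODE shows $\alpha^\delta$, hence $t\mapsto\sigma(\alpha^\delta(t))$, is uniformly Lipschitz in $t$. All of these bounds are independent of $\delta$ and of the length of the interval, so a continuation argument extends $(u^\delta,\alpha^\delta)$ to $[0,T)$.

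Next, differentiate the first equation in $x$: $w:=u_x^\delta$ solves the divergence-form uniformly parabolic equation $w_t=\mu\sigma(\alpha^\delta(t))\,\partial_x\!\big(w_x/(1+w^2)\big)$ with bounded measurable coefficients, so De Giorgi--Nash--Moser theory yields an interior parabolic Hölder bound for $u_x^\delta$ on $Q_T^\varepsilon$ depending only on the listed data. With $u_x^\delta$ Hölder and $\sigma(\alpha^\delta(\cdot))$ Lipschitz in $t$, the coefficient of $u_{xx}^\delta$ lies in a parabolic Hölder class, and parabolic Schauder estimates upgrade $u^\delta$ to $C^{2,\beta}(Q_T^\varepsilon)$ with a bound of the form \eqref{eq:4.27}, uniform in $\delta$; the resulting control of $u_{xx}^\delta$ makes the ODE right-hand side Lipschitz in $t$ on $(\varepsilon,T)$, so $\alpha^\delta\in C^{1,1}((\varepsilon,T))$ uniformly. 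Combining these interior estimates with a uniform modulus of continuity up to $t=0$ — which follows from the global uniform parabolicity and the Lipschitz bound on $u_0$, e.g. by standard barrier or Krylov--Safonov arguments — one passes to a limit $(u^\delta,\alpha^\delta)\to(u,\alpha)$ in $C(\overline{Q_T})\times C([0,T))$ and in $C^{2,\beta}_{\mathrm{loc}}\times C^{1,1}_{\mathrm{loc}}$ on $(0,T)$; the limit solves \eqref{eq:2.13} with the prescribed data and inherits \eqref{eq:4.27}. Uniqueness follows by a Gronwall argument: on any $(\tau,T)$ both solutions are classical with uniform $C^{2,\beta}$ bounds, so $w=u_1-u_2$ satisfies a linear uniformly parabolic equation with forcing $O(|\alpha_1-\alpha_2|)$, while $\alpha_1-\alpha_2$ solves an ODE with right side $O(|\alpha_1-\alpha_2|)+O(\|w\|_{C^1})$ (using $\big||\Gamma^1_s|-|\Gamma^2_s|\big|\le\int_0^1|u_{1,x}-u_{2,x}|\,dx$ and the Lipschitz bound on $\sigma_\alpha$); standard parabolic estimates for $w$, interpolation against the $C^{2,\beta}$ bound, and Gronwall — after letting $\tau\downarrow0$ and using the common continuous initial data — force $w\equiv0$ and $\alpha_1\equiv\alpha_2$.

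The main obstacle is the interior regularity step: converting the bare Lipschitz and gradient a priori bounds into a quantitative $C^{2,\beta}$ estimate that is uniform in the approximation parameter. This is precisely where the uniform parabolicity supplied by \eqref{eq:4.9} and \eqref{eq:4.6} is indispensable, and where one must invoke De Giorgi--Nash--Moser theory for the divergence equation satisfied by $u_x$ and then parabolic Schauder estimates; extra care is needed to track the time regularity of the coefficient $\sigma(\alpha(t))$ — which is only as good as what the length bound and the ODE provide — and to handle the regularity loss at $t=0$ from the merely Lipschitz initial data.
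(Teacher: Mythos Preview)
Your proposal is correct and reaches the same conclusion, but the route differs from the paper's in two places worth noting.

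For existence with smooth data, the paper sets up a Leray--Schauder fixed point: given $w\in C^{1,\beta}(Q_T)$ one solves the ODE for $\alpha_w$ and then a \emph{linear} parabolic equation $u_t=\mu\sigma(\alpha_w)\,u_{xx}/(1+|w_x|^2)$ to define $Aw:=u_w$; the a priori bounds \eqref{eq:4.8}--\eqref{eq:4.9} are then used to show that the Leray--Schauder set $\{u=\eta Au\}$ is bounded, since any such $u$ solves the full nonlinear problem with data $\eta u_0$. Your contraction-plus-continuation argument is a legitimate alternative and arguably more transparent, since it uses the a priori bounds directly to extend a local solution rather than to close a degree-theoretic argument. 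For the interior regularity step the paper simply cites the Lady\v{z}enskaja--Solonnikov--Ural'ceva interior Schauder package to pass from the gradient bound to a $C^\beta$ bound on $u_x$, whereas you spell this out as De~Giorgi--Nash--Moser applied to the divergence-form equation for $w=u_x$ followed by Schauder; these are the same estimate under the hood.

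On uniqueness the paper says only ``obvious from the comparison principle,'' which is terse for a coupled system where the two candidate solutions carry different time-dependent mobilities $\sigma(\alpha_i(t))$. Your Gronwall argument, which couples an energy estimate for $u_1-u_2$ with the ODE for $\alpha_1-\alpha_2$ and uses the Lipschitz bound on $\sigma_\alpha$ together with $\big||\Gamma_t^1|-|\Gamma_t^2|\big|\le\int_0^1|u_{1,x}-u_{2,x}|$, is more explicit and handles the coupling honestly.
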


\begin{proof}
Set $T>0$ and $0<\beta< 1$ and $X:= C^{1, \beta} (Q_T)$.  First, we
assume $u_0 \in C^{2,\beta} (\Omega)$.  Let $w \in X$. Then, $f_w (t) :=
\int _0 ^1 \sqrt{1+|w_x (x,t)| ^2} \, dx$ is continuous and bounded in
$[0,T]$. In addition, the function $g_w (\alpha,t) :=-\gamma
\sigma_\alpha (\alpha) f_w (t) $ is continuous and $|g_w (\alpha,t) -
g_w (\beta,t) | \leq \gamma L (1+\| w \|_X) |\alpha-\beta| $ for any
$\alpha,\beta \in \mathbb{R}$ and $t \in [0,T]$.  Therefore, there
exists a unique solution $\alpha_w (t) $ of
 \begin{equation}
 \label{eq:4.16}
 \left\{
  \begin{aligned}
   (\alpha_w)_t (t)
   &=
   g_w (\alpha_w (t),t),
   &
   \quad
   &t \in (0,T), \\
    \alpha_w (0)&=
    \alpha_0.
  \end{aligned}
 \right.
\end{equation}
 With the same argument as in Lemma \ref{lem:4.2}, we have
 $|\alpha_w(t)|\leq|\alpha_0|$ for $t>0$ from assumption
 \eqref{eq:1.A2}. Thus
\begin{equation*}
 \left| \frac{d}{dt} \alpha_w (t) \right|
 \leq \gamma L (1+\|w\|_X) |\alpha_w (t)|
 \leq \gamma L (1+\|w\|_X) |\alpha_0|, \quad t \in (0,T),
\end{equation*}
and
\begin{equation}
 \label{eq:4.19}
 \left| \frac{d}{dt} \sigma (\alpha_w (t) )\right|
 \leq \gamma L^2 (1+\|w\|_X) |\alpha_0|^2, \quad t \in (0,T),
\end{equation}
where \eqref{eq:1.A2} is used.
Next, we consider the following linearized equation:
\begin{equation}
 \label{eq:4.17}
 \left\{
  \begin{aligned}
   u_t
   &=
   \mu\sigma(\alpha_w)
   \frac{u_{xx}}{1+|w_x|^2},
   &
   \quad
   &x\in \mathbb{T},\ t>0, \\
    u(x,0)&=
    u_0(x),&\quad
    &x\in \mathbb{T}.
  \end{aligned}
 \right.
\end{equation}
Note that $\left\| \frac{\mu\sigma(\alpha_w)}{1+|w_x|^2} \right\| _\infty$ is bounded in $Q_T$ and
\eqref{eq:4.17} is uniformly parabolic in $Q_T$. In addition, we compute
\begin{equation}
 \label{eq:4.20}
  \begin{split}
   \left| \frac{1}{1+|w_x(x,t) |^2}
   -
   \frac{1}{1+|w_x(y,s) |^2} \right|
   &\leq
   \frac{|w_x(x,t) |+|w_x(y,t) |}{(1+|w_x(x,t) |^2)(1+|w_x(y,t) |^2)}
   |w_x (x,t) -w_x (y,s)| \\
   &\leq
   |w_x (x,t) -w_x (y,s)|
  \end{split}
\end{equation}
for any $ (x,t),(y,s) \in \mathbb{T} \times [0,T]$. Therefore,
\eqref{eq:4.19} and \eqref{eq:4.20} imply
\begin{equation}
 \label{eq:4.21}
 \left\|
  \frac{\mu\sigma(\alpha_w)}{1+|w_x|^2}
 \right\|_{C^\beta (Q_T)}
 \leq
 \mu
 (\sup_{|\alpha|\leq|\alpha_0|}
 |\sigma(\alpha)|
 (1+\|w\|_X)
 + \gamma L^2 (1+\|w\|_X) |\alpha_0|^2)
\end{equation}
for any $w \in X$. Thus there exists a unique solution
$u_w \in C ^{2,\beta} (Q_T)$ of \eqref{eq:4.17} with
\begin{equation}
\label{eq:4.18}
\| u_w \|_{C^{2,\beta} (Q_T)} \leq \Cl{const:4.1},
\end{equation}
where $\Cr{const:4.1}>0$ depends only on $\gamma$, $\mu$, $\| w\|_X$,
$L$, $|\alpha_0|$, and $\| u_0 \|_{C^{2,\beta} (\mathbb{T})}$.
Next, we define $A: X\to X$ by $Aw = u_w$.
We remark that $A$ is a continuous and compact operator. Set
\[
S:= \{ u \in X \; | \; u = \eta A u \ \text{in} \ X, \
\text{for some} \ \eta \in [0,1] \}.
\]
Next, we show that $S$ is bounded in $X$. For any $u \in S$, we have
\begin{equation}
 \label{eq:4.22}
 \left\{
  \begin{aligned}
   \frac{u_t}{\sqrt{1+|u_x|^2}}
   &=
   \mu\sigma(\alpha)
   \left(
   \frac{u_x}{\sqrt{1+|u_x|^2}}
   \right)_x,&
   \quad
   &x\in\mathbb{T},\ t>0, \\
   \alpha_t
   &=
   -
   \gamma
   \sigma_\alpha(\alpha)
   |\Gamma_t|, &
   \quad
   &t>0,
   \\
    u(x,0)&=
    \eta u_0(x),&\quad
    &x\in \mathbb{T},\\
    \alpha(0)&=
   \alpha_0,
  \end{aligned}
 \right.
\end{equation}
for some $\eta \in [0,1]$. Here $|\Gamma _t |= \int _0 ^1 \sqrt{1+|u_x (x,t)| ^2 } \, dx$.
The gradient estimate \eqref{eq:4.9} implies
 \begin{equation}
 \label{eq:4.23}
  \sup _{Q_T} |u_x |
   \leq
   \frac{\sigma(\alpha(0))}{\Cr{const:1.1}}
   \sup_{0<x<1}(1+\eta ^2 | (u_0)_x | ^2 ).
 \end{equation}
 By \eqref{eq:4.8}, \eqref{eq:4.23} and the interior Schauder estimates
 (cf. \cite[Theorem 6.2.1]{MR0241822}) we have
  \begin{equation}
 \label{eq:4.24}
   \|u_x \|_{C^\beta (Q_T)}
   \leq
   \Cl{const:4.2},
 \end{equation}
 where $\Cr{const:4.2} >0$ depends only on $\Cr{const:1.1}$,
 $\sigma(\alpha(0))$, $\sup_{0<x<1}  |u _0 (x) |  $, and
 $\sup_{0<x<1}  | (u_0)_x (x) | $.
 Therefore, by an argument similar to \eqref{eq:4.18}, we obtain
 \begin{equation}
 \label{eq:4.25}
 \|u \|_X \leq \| u \|_{C^{2,\beta} (Q_T)} \leq \Cl{const:4.3},
 \end{equation}
 where $\Cr{const:4.3} >0$ depends only on $\Cr{const:1.1}$,
 $\sigma(\alpha(0))$, $\| u_0 \|_{C^{2,\beta} (Q_T)}$.  Hence, $S$ is
 bounded in $X$ and the Leray-Schauder fixed point theorem implies that
 there exists a solution $(u,\alpha) \in C^{2,\beta} (Q_T)\times C
 ^{1,1} (0,T)$ of \eqref{eq:2.13}.

Next, we consider the case when $u_0 $ is a Lipschitz function with
Lipschitz constant $M>0$. Set $\varepsilon \in (0,T)$.  Let $\{ u ^i
_0\} _{i=1}^\infty$ be a family of smooth functions such that $u_0 ^i$
converges uniformly to $u_0$ on $\mathbb{T}$.  Then, \eqref{eq:4.9}
implies
\[
  \sup _{Q_T} |u_x ^i |
   \leq
   \frac{\sigma(\alpha^i (0))}{\Cr{const:1.1}}
   \sup_{0<x<1}(1+ M ^2 ), \quad i\geq 1,
\]
where $(u^i,\alpha ^i)$ is a solution of \eqref{eq:2.13} with
$(u^i (\cdot,0),\alpha^i (0)) =(u_0 ^i,\alpha_0)$.
Using a similar argument as for \eqref{eq:4.24} and \eqref{eq:4.25},
along with the interior Schauder estimates, we have
 \begin{equation}
 \label{eq:4.26}
 \sup _ i \| u^i \|_{C^{2,\beta} (Q_T ^\varepsilon )} \leq \Cl{const:4.4},
 \end{equation}
 where $\Cr{const:4.4} >0$ depends only on $\gamma$, $\mu$, $\varepsilon
 $, $L$, $M$, $\Cr{const:1.1}$, and $\sigma(\alpha(0))$.  Therefore, by
 taking the subsequence, $(u^i,\alpha^i)$ converges to a solution
 $(u,\alpha)$ in $Q_T ^\varepsilon$ with \eqref{eq:4.27}. Thus, from the
 diagonal arguments, we obtain a solution $(u,\alpha) \in C
 (\overline{Q_T} ) \cap C^{2,\beta} (Q_T ^\varepsilon) \times C([0,T))
 \cap C^{1,1} (\varepsilon ,T)$ of \eqref{eq:2.13} with $(u (\cdot
 ,0),\alpha(0) )=(u_0,\alpha_0)$. Uniqueness is obvious from the
 comparison principle, and thereby, we prove Theorem \ref{thm:4.5}.
\end{proof}

We remark that assumption \eqref{eq:1.A1} is not a necessary condition
to obtain the gradient estimate. For example, consider
\begin{equation}
 \sigma(\alpha)=\frac{1}{2}\alpha^2.
\end{equation}
Then assumption \eqref{eq:1.A1} does not hold so we cannot use Theorem
\ref{thm:4.2} directly. However, we may write $\alpha(t)$ explicitly as
\begin{equation}
 \label{eq:5.3}
  \alpha(t)
  =
  \alpha(0)
  \exp
  \left(
   -
   \int_0^t|\Gamma_\tau|\,d\tau
  \right)
\end{equation}
so we obtain
\begin{equation}
 \begin{split}
  v(t_0,x_0)
  &\leq
  \exp
  \left(
  2\int_0^{t_0}
  |\Gamma_\tau|\,d\tau
  \right)
  \sup_{0<x<1}v^2(x,0) \\
  &\leq
  \exp
  \left(
  2t_0|\Gamma_0|
  \right)
  \sup_{0<x<1}v^2(x,0),
 \end{split}
\end{equation}
provided $\alpha(0)\neq0$.

From \eqref{eq:5.3} and $|\Gamma_t|\geq1$ for $t>0$, we have
\begin{equation}
 \label{eq:4.31}
 |\alpha(t)|
  \leq
  |\alpha(0)|
  \exp
  \left(
   -t
  \right).
\end{equation}
Hence the misorientation $\alpha(t)$ goes to $0$ exponentially as
$t\rightarrow\infty$.

\section{Asymptotics of solutions}

In regard to Theorem \ref{thm:4.5}, we can take $T=\infty$ and show the
existence of a unique time global solution of \eqref{eq:2.13}. In this
section, we study large time asymptotic behavior for the
solution. Without loss of generality, we assume that the initial data
$u_0$ is sufficiently smooth by the Schauder estimates.

\begin{theorem}
 \label{thm:5.1}
 Let $u_0:\mathbb{T}\rightarrow\R$, $\alpha_0\in\R$ and assume the same
 assumption as for Theorem \ref{thm:4.5}. Let $(u,\alpha)$ be a time
 global solution of \eqref{eq:2.13}. Then, there exists a constant
 $u_\infty$ such that $\| u_\infty -u \|_{C^2 (\mathbb{T})}$ goes to $0$
 exponentially. In addition, the curvature $\kappa$ also goes to $0$
 uniformly and exponentially on $\mathbb{T}$.
\end{theorem}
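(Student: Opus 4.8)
The plan is to first prove that $u$ converges \emph{exponentially in $L^2(\mathbb{T})$} to its (time-invariant) spatial average, and then to upgrade this to $C^2$-convergence, and to the exponential decay of $\kappa$, by parabolic regularity. The only structural inputs needed are: the a priori gradient bound of Theorem~\ref{thm:4.2}, the uniform-in-time Schauder bound of Theorem~\ref{thm:4.5}, the lower bound \eqref{eq:1.A1}, and the bound $|\alpha(t)|\le|\alpha(0)|$ of Lemma~\ref{lem:4.2}. First I would rewrite the equation \eqref{eq:2.13} for $u$ as $u_t=\mu\sigma(\alpha)\,\partial_x(\arctan u_x)$ (using $\kappa=u_{xx}/(1+u_x^2)^{3/2}$ and $v_n=u_t/\sqrt{1+u_x^2}$), so that the periodic boundary condition gives $\frac{d}{dt}\int_0^1 u(x,t)\,dx=\mu\sigma(\alpha(t))\big[\arctan u_x\big]_0^1=0$. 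Hence $\int_0^1 u(\cdot,t)\,dx\equiv u_\infty:=\int_0^1 u_0\,dx$, and $w:=u-u_\infty$ has zero spatial mean for all $t$.

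\textbf{Exponential $L^2$-decay.} Next I would differentiate $\tfrac12\int_0^1 w^2\,dx$: since $w_t=u_t$ and $w_x=u_x$,
\[
\frac{d}{dt}\frac12\int_0^1 w^2\,dx
=\mu\sigma(\alpha)\int_0^1 w\,\partial_x(\arctan u_x)\,dx
=-\mu\sigma(\alpha)\int_0^1 u_x\arctan(u_x)\,dx.
\]
By Theorem~\ref{thm:4.2} there is $M':=\sup_{\mathbb{T}\times(0,\infty)}|u_x|<\infty$, and since $s\mapsto \arctan(s)/s$ is decreasing on $(0,M']$ we have $u_x\arctan u_x\ge c_\ast u_x^2$ with $c_\ast:=\arctan(M')/M'>0$. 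Using \eqref{eq:1.A1}, $\sigma(\alpha)\ge\Cr{const:1.1}$, together with Wirtinger's inequality $\int_0^1 w^2\,dx\le (2\pi)^{-2}\int_0^1 w_x^2\,dx$ (valid since $\int_0^1 w\,dx=0$), this gives $\frac{d}{dt}\int_0^1 w^2\,dx\le -\delta_0\int_0^1 w^2\,dx$ with $\delta_0:=8\pi^2\mu\,\Cr{const:1.1}\,c_\ast>0$, hence $\|w(\cdot,t)\|_{L^2(\mathbb{T})}\le e^{-\delta_0 t/2}\|w(\cdot,0)\|_{L^2(\mathbb{T})}$.

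\textbf{Bootstrap to $C^2$ and to $\kappa$.} From the zero-mean property and $\|w_x(\cdot,t)\|_{L^2}=\|u_x(\cdot,t)\|_{L^2}\le M'$ one gets $\|w(\cdot,t)\|_{L^\infty}^2\le \|w(\cdot,t)\|_{L^2}^2+2M'\|w(\cdot,t)\|_{L^2}$, so $\|w(\cdot,t)\|_{L^\infty(\mathbb{T})}\le C e^{-\delta_0 t/4}$. Now $w$ solves the linear, uniformly parabolic equation $w_t=a(x,t)\,w_{xx}$ with $a=\mu\sigma(\alpha)/(1+u_x^2)$, whose ellipticity constants and parabolic $C^{\beta}$-norm are bounded uniformly for $t\ge 1$ by the uniform Schauder estimate \eqref{eq:4.27} of Theorem~\ref{thm:4.5} (which controls $u_x$ uniformly in $C^\beta$) together with the Hölder-in-time bound on $\sigma(\alpha(t))$ (from \eqref{eq:4.6} and the assumptions on $\sigma$). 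Interior parabolic Schauder estimates on the cylinders $\mathbb{T}\times(t-1,t)$ then give $\|w(\cdot,t)\|_{C^{2,\beta}(\mathbb{T})}\le C\|w\|_{L^\infty(\mathbb{T}\times(t-1,t))}\le C e^{-\delta_0(t-1)/4}$, which yields $\|u_\infty-u\|_{C^2(\mathbb{T})}\to 0$ exponentially. Finally $|\kappa|=|u_{xx}|/(1+u_x^2)^{3/2}\le|u_{xx}|=|w_{xx}|\le\|w(\cdot,t)\|_{C^2(\mathbb{T})}$, so $\kappa\to 0$ uniformly and exponentially on $\mathbb{T}$.

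\textbf{Main obstacle and technical points.} The essential step is the exponential $L^2$-decay, and within it the coercivity $\int_0^1 u_x\arctan u_x\,dx\ge c_\ast\int_0^1 u_x^2\,dx$: this is exactly where the a priori gradient bound of Theorem~\ref{thm:4.2} — hence ultimately the weighted monotonicity formula — enters, since without an upper bound on $|u_x|$ the ratio $\arctan(u_x)/u_x$ could degenerate to $0$ and no spectral gap would survive. Everything after that is routine linear parabolic regularity, since Theorem~\ref{thm:4.5} already furnishes Schauder bounds that are uniform in $t$. One minor point to handle carefully: if $u_0$ is only Lipschitz one should run the $L^2$-estimate from a fixed time $t=\varepsilon>0$ (or invoke the smoothing already used in the statement of Theorem~\ref{thm:5.1}), which only changes the multiplicative constant; note also that $\sigma(\alpha(t))$ is bounded above on $[0,\infty)$ by Lemma~\ref{lem:4.2} and continuity of $\sigma$, which is what keeps the parabolic operator non-degenerate. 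The argument makes no use of any convergence of $\alpha$ itself.
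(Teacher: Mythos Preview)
Your argument is correct, and it takes a genuinely different route from the paper's own proof.

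The paper proceeds by a chain of $L^2$ energy estimates for $u_x$, $u_{xx}$, and $u_{xxx}$: differentiating the equation up to third order (the identities \eqref{eq:5.10}), it shows successively that $\int_0^1 |u_x|^2$, $\int_0^1 |u_{xx}|^2$, and $\int_0^1 |u_{xxx}|^2$ decay exponentially, each step feeding on the previous one via Gronwall (and the last step also using a higher-order Schauder bound for $u_x$). Pointwise decay of $u_{xx}$ and $\kappa$ then comes from the one-dimensional Sobolev inequality $|u_{xx}|\le(\int|u_{xxx}|^2)^{1/2}$, and finally $u_\infty$ is obtained as the limit of the Cauchy-in-$t$ family $u(\cdot,t)$, using the exponential bound on $|u_t|\le\mu\sigma|u_{xx}|$.

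Your approach is more structural: you first observe that $\int_0^1 u$ is a conserved quantity (since $u_t=\mu\sigma\,\partial_x(\arctan u_x)$), so the limit $u_\infty=\int_0^1 u_0$ is known from the start; a single $L^2$ estimate on $w=u-u_\infty$ gives exponential decay, and the bootstrap to $C^{2,\beta}$ is handled in one stroke by interior Schauder estimates for the linear equation $w_t=a(x,t)w_{xx}$, applied on unit-length time windows with coefficients controlled by the uniform-in-$T$ bound \eqref{eq:4.27}. This bypasses the third-order computations and the auxiliary higher Schauder estimate \eqref{eq:5.13} that the paper needs. The price is that you rely on the interior Schauder estimate as a black box (including its dependence only on the $C^\beta$-norm and ellipticity of $a$, which you correctly justify), whereas the paper's derivation is more hands-on and makes the decay constants more explicit. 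Both approaches use the gradient bound of Theorem~\ref{thm:4.2} at the key coercivity step --- yours via $c_\ast=\arctan(M')/M'$, the paper's via the lower bound on $1/(1+u_x^2)$ in the Poincar\'e step.
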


 To prove Theorem \ref{thm:5.1}, we first derive the energy dissipation
 estimates for \eqref{eq:2.13}. In fact, the estimates are obvious from
 the derivation of equation \eqref{eq:2.13}.


\begin{proposition}
 \label{prop:5.1}
 Let $(u,\alpha)$ be a solution of \eqref{eq:2.13}. Then
 \begin{equation}
  \label{eq:5.1}
  \frac{d}{dt}|\Gamma_t|
   +
   \mu\sigma(\alpha(t))
   \int_{\Gamma_t}
   \kappa^2
   =0,
 \end{equation}
 where $\kappa=\left(\frac{u_x}{\sqrt{1+u_x^2}}\right)_x$.
\end{proposition}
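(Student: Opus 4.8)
The plan is to read off \eqref{eq:5.1} as the first variation of arclength along the flow \eqref{eq:2.13}. First I would differentiate the length
\[
  |\Gamma_t| = \int_0^1 \sqrt{1+u_x^2}\,dx
\]
under the integral sign, which is justified because $(u,\alpha)$ is a classical solution (in particular $u$ is $C^{2}$ in $x$ and $C^1$ in $t$ on $\mathbb{T}$), to get
\[
  \frac{d}{dt}|\Gamma_t| = \int_0^1 \frac{u_x u_{xt}}{\sqrt{1+u_x^2}}\,dx .
\]

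Next I would integrate by parts in $x$. Since $u$ satisfies the periodic boundary conditions $u(0,t)=u(1,t)$ and $u_x(0,t)=u_x(1,t)$ from \eqref{eq:2.13}, the integrand $\frac{u_x}{\sqrt{1+u_x^2}}\,u_t$ is periodic, so the boundary terms cancel and
\[
  \frac{d}{dt}|\Gamma_t|
  = -\int_0^1 \left(\frac{u_x}{\sqrt{1+u_x^2}}\right)_x u_t\,dx
  = -\int_0^1 \kappa\, u_t\,dx .
\]
Then I would substitute the first equation of \eqref{eq:2.13} in the form $u_t = \mu\sigma(\alpha(t))\,\kappa\sqrt{1+u_x^2}$, which yields
\[
  \frac{d}{dt}|\Gamma_t|
  = -\mu\sigma(\alpha(t))\int_0^1 \kappa^2\sqrt{1+u_x^2}\,dx
  = -\mu\sigma(\alpha(t))\int_{\Gamma_t}\kappa^2 ,
\]
which is precisely \eqref{eq:5.1} after rearranging.

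There is essentially no obstacle in this argument; it is the standard computation of the rate of change of length under a curvature-driven flow, and the only point worth a sentence is the vanishing of the boundary terms, which follows from periodicity. Alternatively, one could obtain \eqref{eq:5.1} by specializing the free energy dissipation identity \eqref{eq:2.14}, or more directly by noting that the second term on the right-hand side of the computation of $\frac{dE}{dt}$ in the proof of Proposition~\ref{eq:2.14} is exactly $\sigma(\alpha(t))\frac{d}{dt}|\Gamma_t|$ and is transformed there into $-\frac1\mu\int_0^1 \bigl(\frac{u_t}{\sqrt{1+u_x^2}}\bigr)^2\sqrt{1+u_x^2}\,dx = -\mu\sigma(\alpha(t))^2\int_{\Gamma_t}\kappa^2$; dividing by $\sigma(\alpha(t))$ gives \eqref{eq:5.1}. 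I would nevertheless present the self-contained direct computation above.
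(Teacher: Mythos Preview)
Your proof is correct and essentially identical to the paper's own proof: differentiate $|\Gamma_t|=\int_0^1\sqrt{1+u_x^2}\,dx$ under the integral, integrate by parts using periodicity, and substitute the first equation of \eqref{eq:2.13}. The paper presents exactly this computation (without your additional remark on the alternative route via \eqref{eq:2.14}).
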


\begin{proof}
 Taking the time derivative to $|\Gamma_t|$ and integrating by parts, we
 obtain
 \begin{equation*}
  \begin{split}
   \frac{d}{dt}|\Gamma_t|
   &=
   \int_{0}^1\frac{u_xu_{xt}}{\sqrt{1+u_x^2}}\,dx \\
   &=
   -\int_{0}^1
   \left(
   \frac{u_x}{\sqrt{1+u_x^2}}
   \right)_x
   u_t\,dx \\
   &=
   -\mu\sigma(\alpha(t))\int_{0}^1
   \left(
   \frac{u_x}{\sqrt{1+u_x^2}}
   \right)_x^2
   \sqrt{1+u_x^2}\,dx
   =
   -\mu\sigma(\alpha(t))\int_{\Gamma_t}
   \kappa^2.
  \end{split}
\end{equation*}
\end{proof}

Since the second term of left hand side of \eqref{eq:5.1} is
non-negative, $\frac{d}{dt}|\Gamma_t|$ has to be non-positive, and hence
we have
\begin{corollary}
 Let $(u,\alpha)$ be a solution of \eqref{eq:2.13}. Assume
 $\sigma\geq0$. Then $|\Gamma_t|\leq |\Gamma_0|$ for $t>0$.
\end{corollary}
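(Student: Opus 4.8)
The plan is to obtain the conclusion as an immediate consequence of the energy dissipation identity in Proposition~\ref{prop:5.1}. First I would observe that in \eqref{eq:5.1} the quantity $\mu\sigma(\alpha(t))\int_{\Gamma_t}\kappa^2$ is non-negative for every $t>0$: indeed $\mu>0$ is a fixed positive constant, $\sigma(\alpha(t))\geq0$ by the standing hypothesis $\sigma\geq0$, and the integrand $\kappa^2$ is pointwise non-negative, so the integral is non-negative. Rearranging \eqref{eq:5.1} therefore gives
\[
 \frac{d}{dt}|\Gamma_t|=-\mu\sigma(\alpha(t))\int_{\Gamma_t}\kappa^2\leq0,
\]
so that $t\mapsto|\Gamma_t|$ is non-increasing on $(0,\infty)$.

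Next I would integrate this differential inequality in time. Since $(u,\alpha)$ is a solution in the sense made precise in Theorem~\ref{thm:4.5}, the length functional $t\mapsto|\Gamma_t|=\int_0^1\sqrt{1+u_x^2}\,dx$ is continuous on $[0,\infty)$ and continuously differentiable on $(0,\infty)$, with derivative given by Proposition~\ref{prop:5.1}; hence the fundamental theorem of calculus applies and yields
\[
 |\Gamma_t|-|\Gamma_0|=\int_0^t\frac{d}{ds}|\Gamma_s|\,ds=-\mu\int_0^t\sigma(\alpha(s))\left(\int_{\Gamma_s}\kappa^2\right)ds\leq0
\]
for all $t>0$, which is precisely the asserted bound $|\Gamma_t|\leq|\Gamma_0|$.

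There is no real obstacle here: the statement is a direct corollary of Proposition~\ref{prop:5.1}, and the only point worth a word of care is the mild regularity of $t\mapsto|\Gamma_t|$ needed to pass from the pointwise inequality $\frac{d}{dt}|\Gamma_t|\leq0$ to the integrated one, which is already guaranteed by the existence theory of Section~4. As an alternative route one could start instead from the free energy dissipation \eqref{eq:2.14}, which shows that $E(t)=\sigma(\alpha(t))|\Gamma_t|$ is non-increasing, and then combine this with the positivity assumption \eqref{eq:1.A1} to control $|\Gamma_t|$; however the argument through \eqref{eq:5.1} is shorter and, unlike that variant, does not even require \eqref{eq:1.A1}.
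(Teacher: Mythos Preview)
Your proof is correct and follows exactly the paper's approach: the paper deduces the corollary directly from Proposition~\ref{prop:5.1} by noting that the second term on the left of \eqref{eq:5.1} is non-negative, so $\frac{d}{dt}|\Gamma_t|\leq 0$. Your write-up is slightly more detailed (spelling out the integration in time and the regularity needed), but the argument is the same.
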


From Proposition \ref{prop:5.1}, $\kappa^2$ is integrable on
$(0,1)\times(0,\infty)$. Hence,
\begin{corollary}
 Let $(u,\alpha)$ be a time global solution of \eqref{eq:2.13}. Assume
 \eqref{eq:1.A1}. Then, there is a sequence
 $\{t_j\}_{j=1}^\infty$ such that $t_j\rightarrow\infty$ and
 $\kappa(x, t_j)\rightarrow0$ almost all $x\in(0,1)$ as
 $j\rightarrow\infty$.
\end{corollary}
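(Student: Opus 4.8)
The plan is to convert the energy dissipation identity of Proposition~\ref{prop:5.1} into a finite space--time $L^2$-bound for $\kappa$, and then to upgrade this to pointwise convergence along a suitable sequence of times by elementary measure theory.

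First I would integrate \eqref{eq:5.1} in time. Using $\sigma(\alpha(t))\geq\Cr{const:1.1}$ from \eqref{eq:1.A1} together with $\int_{\Gamma_t}\kappa^2=\int_0^1\kappa^2\sqrt{1+u_x^2}\,dx\geq\int_0^1\kappa^2\,dx$, the identity \eqref{eq:5.1} gives $\frac{d}{dt}|\Gamma_t|\leq-\mu\Cr{const:1.1}\int_0^1\kappa^2(x,t)\,dx$. Integrating this on $(0,T)$, recalling that $u_0$ is Lipschitz so $|\Gamma_0|<\infty$ (and $|\Gamma_t|\leq|\Gamma_0|$ for $t>0$ by the corollary above), and using $|\Gamma_T|\geq1>0$, I obtain
\begin{equation*}
 \mu\Cr{const:1.1}\int_0^T\!\!\int_0^1\kappa^2\,dx\,dt
 \leq|\Gamma_0|-|\Gamma_T|\leq|\Gamma_0|,
\end{equation*}
and letting $T\to\infty$ yields $\kappa\in L^2\bigl((0,1)\times(0,\infty)\bigr)$ with $\int_0^\infty\!\int_0^1\kappa^2\,dx\,dt\leq|\Gamma_0|/(\mu\Cr{const:1.1})$. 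This merely makes precise the remark preceding the statement.

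Next I would set $G(t):=\int_0^1\kappa^2(x,t)\,dx$, which is non-negative, finite and continuous on $(0,\infty)$ since $u$ is locally $C^{2,\beta}$ there, and satisfies $\int_0^\infty G(t)\,dt<\infty$ by the bound above. For each $n\in\N$ put $a_n:=\int_n^{n+1}G(t)\,dt$; since $\sum_{n}a_n<\infty$ we have $a_n\to0$, and by the mean value theorem for integrals there is $t_n\in[n,n+1]$ with $G(t_n)=a_n$, whence $t_n\to\infty$ and $G(t_n)\to0$, i.e.\ $\kappa(\cdot,t_n)\to0$ in $L^2(0,1)$. Finally, since an $L^2$-convergent sequence has a subsequence converging almost everywhere, there is $\{t_{n_k}\}$ with $t_{n_k}\to\infty$ and $\kappa(x,t_{n_k})\to0$ for a.e.\ $x\in(0,1)$; relabelling $\{t_{n_k}\}$ as $\{t_j\}$ proves the corollary.

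I do not anticipate a genuine obstacle: the statement is a soft consequence of the space--time $L^2$-bound on the curvature, and the only points requiring (routine) care are the finiteness of $|\Gamma_0|$, the continuity of $G$, and the standard extraction of an almost-everywhere convergent subsequence from $L^2$-convergence. Strengthening the conclusion to convergence of $\kappa(\cdot,t_j)$ in a stronger topology, or along the full limit $t\to\infty$, would instead require the gradient and Schauder estimates of Section~4 -- which is precisely the route taken later in Theorem~\ref{thm:5.1}.
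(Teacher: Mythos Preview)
Your proposal is correct and follows exactly the approach the paper indicates: the paper does not write out a proof but merely remarks that ``$\kappa^2$ is integrable on $(0,1)\times(0,\infty)$'' by Proposition~\ref{prop:5.1}, and states the corollary as an immediate consequence. Your argument makes this explicit in the standard way---integrate \eqref{eq:5.1} using \eqref{eq:1.A1} to get the space--time $L^2$ bound, pick $t_n$ with $\int_0^1\kappa^2(x,t_n)\,dx\to0$, then pass to an a.e.\ convergent subsequence---so there is nothing to add.
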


%
%
%
%

We derive more explicit decay estimates via the energy methods. Note that
if $(u,\alpha)$ is a classical solution of \eqref{eq:2.13}, then
\begin{equation}
 u_t
  =
  \frac{\mu\sigma(\alpha)}{1+|u_x|^2}u_{xx}.
\end{equation}
Taking the derivative with respect to $x$, we obtain
\begin{equation}
 \label{eq:5.10}
  \begin{split}
   u_{xt}
   &=
   \frac{\mu\sigma(\alpha)}{1+|u_x|^2}u_{xxx}
   -
   \frac{2\mu\sigma(\alpha)}{(1+|u_x|^2)^2}u_xu^2_{xx}, \\
   u_{xxt}
   &=
   \frac{\mu\sigma(\alpha)}{1+|u_x|^2}u_{xxxx}
   -
   \frac{6\mu\sigma(\alpha)}{(1+|u_x|^2)^2}u_xu_{xx}u_{xxx}
   -
   \frac{2\mu\sigma(\alpha)}{(1+|u_x|^2)^2}u^3_{xx}
   +
   \frac{8\mu\sigma(\alpha)}{(1+|u_x|^2)^3}u^2_{x}u_{xx}^3, \\
   u_{xxxt}
   &=
   \frac{\mu\sigma(\alpha)}{1+|u_x|^2}u_{xxxxx}
   -
   \frac{8\mu\sigma(\alpha)}{(1+|u_x|^2)^2}u_xu_{xx}u_{xxxx}
   -
   \frac{12\mu\sigma(\alpha)}{(1+|u_x|^2)^2}u^2_{xx}u_{xxx}
   -
   \frac{6\mu\sigma(\alpha)}{(1+|u_x|^2)^2}u_{x}u^2_{xxx} \\
   &\quad
   +
   \frac{48\mu\sigma(\alpha)}{(1+|u_x|^2)^3}u^2_xu^2_{xx}u_{xxx}
   +
   \frac{24\mu\sigma(\alpha)}{(1+|u_x|^2)^3}u_xu^4_{xx}
   -\frac{48\mu\sigma(\alpha)}{(1+|u_x|^2)^4}u^3_{x}u_{xx}^4.
  \end{split}
\end{equation}

\begin{proposition}
 Let $(u,\alpha)$ be a classical solution of \eqref{eq:2.13}. Then
 there exists $\Cl{const:5.1}>0$ such that
 \begin{equation}
  \label{eq:5.11}
   \int_0^1
   |u_x(x,t)|^2\,dx
   \leq
   e^{-\Cr{const:5.1}t}
   \int_0^1
  |u_{0x}(x)|^2\,dx
 \end{equation}
 for $t>0$.
\end{proposition}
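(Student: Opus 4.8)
The plan is to carry out a direct $L^2$ energy estimate for $u_x$ on the circle $\mathbb{T}$, using the equation in the form $u_t = \frac{\mu\sigma(\alpha)}{1+|u_x|^2}u_{xx}$, the a priori gradient bound from Theorem~\ref{thm:4.2}, and the Wirtinger (Poincar\'e) inequality on $\mathbb{T}$.

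First I would differentiate $\int_0^1 |u_x(x,t)|^2\,dx$ in time. Since $(u,\alpha)$ is a classical solution the integration by parts is justified, and the periodic condition $u_x(0,t)=u_x(1,t)$ together with $u_t(0,t)=u_t(1,t)$ (obtained by differentiating $u(0,t)=u(1,t)$ in $t$) removes the boundary term, so that
\[
 \frac{d}{dt}\int_0^1 |u_x|^2\,dx
 = 2\int_0^1 u_x u_{xt}\,dx
 = -2\int_0^1 u_{xx}u_t\,dx
 = -2\mu\sigma(\alpha(t))\int_0^1 \frac{u_{xx}^2}{1+|u_x|^2}\,dx .
\]
Next I would estimate the right-hand side from above by a negative multiple of the same energy. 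Assumption \eqref{eq:1.A1} gives $\sigma(\alpha(t))\geq \Cr{const:1.1}$, while Theorem~\ref{thm:4.2} yields $\sup_{\mathbb{T}\times(0,\infty)}|u_x|\leq C_0$ with $C_0$ depending only on $\Cr{const:1.1}$, $\sigma(\alpha(0))$ and $\sup_{0<x<1}|u_{0x}|$; in particular $1+|u_x|^2\leq 1+C_0^2$ uniformly in $t$. Hence
\[
 \frac{d}{dt}\int_0^1 |u_x|^2\,dx
 \leq -\frac{2\mu\Cr{const:1.1}}{1+C_0^2}\int_0^1 u_{xx}^2\,dx .
\]

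Finally I would invoke the Wirtinger inequality: because $u(0,t)=u(1,t)$ the function $u_x(\cdot,t)$ has zero mean over $\mathbb{T}$, so $\int_0^1 u_{xx}^2\,dx\geq 4\pi^2\int_0^1 u_x^2\,dx$. Combining the last two displays gives $\frac{d}{dt}\int_0^1 |u_x|^2\,dx \leq -\Cr{const:5.1}\int_0^1 |u_x|^2\,dx$ with $\Cr{const:5.1}:=\frac{8\pi^2\mu\Cr{const:1.1}}{1+C_0^2}$, and Gronwall's lemma yields \eqref{eq:5.11}. I do not expect a genuine obstacle here; the only point requiring care is that the coefficient $1/(1+|u_x|^2)$ must be bounded below by a constant \emph{independent of time}, which is exactly what the gradient estimate of Theorem~\ref{thm:4.2} provides (it holds for every $t_0>0$ with the same right-hand side), so that $\Cr{const:5.1}$ is a genuine positive constant and the decay is global in time.
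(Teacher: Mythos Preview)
Your proposal is correct and follows essentially the same approach as the paper: both arrive at $\frac{d}{dt}\int_0^1|u_x|^2\,dx=-2\mu\sigma(\alpha)\int_0^1\frac{u_{xx}^2}{1+|u_x|^2}\,dx$, then combine \eqref{eq:1.A1}, the gradient bound of Theorem~\ref{thm:4.2}, the Poincar\'e inequality, and Gronwall. The only cosmetic difference is that you integrate by parts first and then insert $u_t=\mu\sigma(\alpha)u_{xx}/(1+|u_x|^2)$, whereas the paper first substitutes the differentiated equation for $u_{xt}$ and then integrates by parts; the resulting identity is identical.
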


\begin{proof}
 Taking the derivative of the left hand side of \eqref{eq:5.11} and then
 integrating by parts, we have
 \begin{equation}
  \begin{split}
   \frac{d}{dt}
   \int_0^1
   |u_x(x,t)|^2\,dx
   &=
   2\int_0^1
   u_x(x,t)u_{xt}(x,t)\,dx \\
   &=
   2\int_0^1
   \frac{\mu\sigma(\alpha)}{1+|u_x|^2}u_xu_{xxx}\,dx
   -
   4\int_0^1
   \frac{\mu\sigma(\alpha)}{(1+|u_x|^2)^2}u^2_xu^2_{xx}\,dx \\
   &=
   -2\int_0^1
   \frac{\mu\sigma(\alpha)}{1+|u_x|^2}u^2_{xx}\,dx.
  \end{split}
 \end{equation}
 Using Assumption \eqref{eq:1.A1}, Theorem \ref{thm:4.2} and the
 Poincar\'e inequality, we obtain
 \begin{equation}
  -2
   \int_0^1
   \frac{\mu\sigma(\alpha)}{1+|u_x|^2}u^2_{xx}\,dx
   \leq
   -
   \Cr{const:5.1}
   \int_0^1
   |u_x(t,x)|^2\,dx,
 \end{equation}
 where $\Cr{const:5.1}>0$ is a positive constant depending only on
 $\mu$, $\Cr{const:1.1}$. $\sigma(\alpha_0)$,
 $\sup_{x\in\mathbb{T}}(1+u_{0x}^2(x))$. By the Gronwall inequality, we
 obtain \eqref{eq:5.11}.
\end{proof}

\begin{proposition}
 \label{prop:5.6}
 Let $(u,\alpha)$ be a classical solution of \eqref{eq:2.13}. Then
 there exists $\Cl{const:5.2}>0$ such that
 \begin{equation}
  \label{eq:5.12}
   \int_0^1
   |u_{xx}(x,t)|^2\,dx
   \leq
   e^{-\Cr{const:5.2}t}
   \int_0^1
  |u_{0xx}(x)|^2\,dx
 \end{equation}
 for $t>0$.
\end{proposition}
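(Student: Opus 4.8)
The plan is to carry out the scheme of the proof of \eqref{eq:5.11} one derivative higher, and then to use the already established exponential decay of $\|u_x\|_{L^2}$ to control the extra quartic term that appears. Write $\beta(t):=\mu\sigma(\alpha(t))$ (which depends on $t$ only) and $v:=\sqrt{1+u_x^2}$. Differentiating in $t$, substituting the expression for $u_{xxt}$ in \eqref{eq:5.10}, and integrating by parts (twice, the second time after rewriting the cross term by means of $u_{xx}^2u_{xxx}=\tfrac13(u_{xx}^3)_x$; all boundary terms vanish by the periodic conditions in \eqref{eq:2.13}) one obtains the identity
\begin{equation*}
 \frac{d}{dt}\int_0^1 u_{xx}^2\,dx
 =-2\beta(t)\int_0^1\frac{u_{xxx}^2}{v^2}\,dx
 +\frac{4\beta(t)}{3}\int_0^1\frac{(3u_x^2-1)}{v^6}\,u_{xx}^4\,dx .
\end{equation*}
The first term on the right is the exact analogue of the dissipation term in the proof of \eqref{eq:5.11}: by \eqref{eq:1.A1}, the gradient bound \eqref{eq:4.9} of Theorem~\ref{thm:4.2}, and the Poincar\'e inequality for the mean-zero function $u_{xx}$ on $\mathbb{T}$ (indeed $\int_0^1 u_{xx}\,dx=u_x(1,t)-u_x(0,t)=0$ by periodicity), it is $\le-c_0\int_0^1 u_{xx}^2\,dx$ with $c_0>0$ depending only on $\mu$, $\Cr{const:1.1}$, and the gradient bound of Theorem~\ref{thm:4.2}.

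The one genuine difficulty is the quartic term: its integrand is sign-indefinite, being positive wherever $|u_x|>1/\sqrt3$, so it cannot simply be discarded for all $t$. The remedy is that this bad region disappears for large time. From Theorem~\ref{thm:4.5} with $T=\infty$ (whose Schauder bound $\Cr{const:4.5}$ does not depend on $T$) one has $\sup_{t\ge\varepsilon}\|u_{xx}(\cdot,t)\|_{L^2(\mathbb{T})}<\infty$, while \eqref{eq:5.11} gives $\|u_x(\cdot,t)\|_{L^2(\mathbb{T})}\le Ce^{-\Cr{const:5.1}t}$; since $u_x$ is mean-zero on $\mathbb{T}$, the Gagliardo--Nirenberg (Agmon) interpolation inequality yields
\begin{equation*}
 \|u_x(\cdot,t)\|_{L^\infty(\mathbb{T})}^2\le C\,\|u_x(\cdot,t)\|_{L^2(\mathbb{T})}\,\|u_{xx}(\cdot,t)\|_{L^2(\mathbb{T})}\le Ce^{-\Cr{const:5.1}t/2},
\end{equation*}
so there is $T_0>0$, depending only on the data, such that $|u_x(x,t)|<1/\sqrt3$ for all $x\in\mathbb{T}$ and all $t\ge T_0$. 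For such $t$ the quartic term is non-positive, hence $\frac{d}{dt}\int_0^1 u_{xx}^2\,dx\le-c_0\int_0^1 u_{xx}^2\,dx$.

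Gronwall's inequality on $[T_0,\infty)$ then gives $\int_0^1 u_{xx}^2(\cdot,t)\,dx\le e^{-c_0(t-T_0)}\int_0^1 u_{xx}^2(\cdot,T_0)\,dx$ for $t\ge T_0$; since $t\mapsto\int_0^1 u_{xx}^2\,dx$ is continuous, hence bounded, on the compact interval $[0,T_0]$, one obtains the exponential bound \eqref{eq:5.12} with $\Cr{const:5.2}=c_0$ after adjusting the multiplicative constant on $[0,T_0]$ (all constants then depending only on $\mu$, $\gamma$, $\Cr{const:1.1}$, $\sigma(\alpha(0))$ and the initial data, as in Theorem~\ref{thm:4.5}). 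I expect the sign-indefinite quartic term to be the only delicate point, the crucial step being the upgrade of the $L^2$-decay of $u_x$ in \eqref{eq:5.11} to an $L^\infty$-decay by interpolating against the uniform bound on $\|u_{xx}(\cdot,t)\|_{L^2}$ furnished by Theorem~\ref{thm:4.5}; everything else is a routine repetition of the computation behind \eqref{eq:5.11}.
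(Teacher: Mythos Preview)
Your proof is correct, but your handling of the quartic term differs from the paper's. After the first integration by parts both you and the paper arrive at
\[
\frac{d}{dt}\int_0^1 u_{xx}^2\,dx
=-2\beta\int_0^1\frac{u_{xxx}^2}{v^2}\,dx
-8\beta\int_0^1\frac{u_x u_{xx}^2 u_{xxx}}{v^4}\,dx
-4\beta\int_0^1\frac{u_{xx}^4}{v^4}\,dx
+16\beta\int_0^1\frac{u_x^2 u_{xx}^4}{v^6}\,dx .
\]
At this point the paper applies Young's inequality to the cross term, sacrificing half of the dissipation, and observes that every surviving \emph{positive} quartic term carries a factor $u_x^2$; using the uniform bound $\|u_{xx}\|_{L^\infty}\le \Cr{const:4.5}$ from Theorem~\ref{thm:4.5} they then estimate the whole error by $\Cr{const:5.3}\int_0^1 u_x^2\,dx$, invoke \eqref{eq:5.11} directly as a decaying forcing, and run Gronwall on all of $[0,\infty)$. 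You instead integrate the cross term by parts once more to produce the single quartic $\tfrac{4\beta}{3}\int (3u_x^2-1)v^{-6}u_{xx}^4\,dx$, and then use \eqref{eq:5.11} together with the Agmon interpolation (against the same Schauder bound) to make $\|u_x\|_{L^\infty}$ small, so that the quartic becomes non-positive after a waiting time~$T_0$.

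Both routes use the same three ingredients (Poincar\'e, the Schauder bound~\eqref{eq:4.27}, and \eqref{eq:5.11}); the paper's is a bit more direct since it yields a single differential inequality valid for all $t$, whereas yours needs the extra interpolation step and a separate argument on $[0,T_0]$. On the other hand, your combined quartic identity is exact and makes transparent why smallness of $u_x$ suffices. Note finally that neither argument literally gives the multiplicative constant $1$ in \eqref{eq:5.12}: the paper's Gronwall with forcing produces an extra $\int u_{0x}^2$ contribution (absorbable into $\int u_{0xx}^2$ by Poincar\'e), and your ``adjusting the multiplicative constant'' does the same; this is a harmless imprecision in the statement rather than in either proof.
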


\begin{proof}
 Taking the derivative of the left hand side of \eqref{eq:5.12} and then
 integrating by parts, we have
 \begin{equation}
  \begin{split}
   \frac{d}{dt}
   \int_0^1
   |u_{xx}(x,t)|^2\,dx
   &=
   2\int_0^1
   u_{xx}(x,t)u_{xxt}(x,t)\,dx \\
   &=
   2\int_0^1
   \frac{\mu\sigma(\alpha)}{1+|u_x|^2}u_{xx}u_{xxxx}\,dx
   -
   12\int_0^1
   \frac{\mu\sigma(\alpha)}{(1+|u_x|^2)^2}u_xu^2_{xx}u_{xxx}\,dx \\
   &\quad
   -
   4\int_0^1\frac{\mu\sigma(\alpha)}{(1+|u_x|^2)^2}u^4_{xx} \,dx
   +
   16\int_0^1
   \frac{\mu\sigma(\alpha)}{(1+|u_x|^2)^3}u^2_{x}u_{xx}^4\,dx \\
   &=
   -2\int_0^1
   \frac{\mu\sigma(\alpha)}{1+|u_x|^2}u^2_{xxx}\,dx
   -
   8\int_0^1
   \frac{\mu\sigma(\alpha)}{(1+|u_x|^2)^2}u_xu^2_{xx}u_{xxx}\,dx \\
   &\quad
   -
   4\int_0^1\frac{\mu\sigma(\alpha)}{(1+|u_x|^2)^2}u^4_{xx} \,dx
   +
   16\int_0^1
   \frac{\mu\sigma(\alpha)}{(1+|u_x|^2)^3}u^2_{x}u_{xx}^4\,dx.
  \end{split}
 \end{equation}
 By the Young inequality,
 \begin{equation*}
  8
   \left|
    \int_0^1
    \frac{\mu\sigma(\alpha)}{(1+|u_x|^2)^2}u_xu^2_{xx}u_{xxx}\,dx
   \right|
   \leq
   \int_0^1
   \frac{\mu\sigma(\alpha)}{(1+|u_x|^2)}u^2_{xxx}\,dx
   +
   16
   \int_0^1
    \frac{\mu\sigma(\alpha)}{(1+|u_x|^2)^6}u^2_xu^4_{xx}\,dx,
 \end{equation*}
 hence
 \begin{equation*}
  \begin{split}
   \frac{d}{dt}
   \int_0^1
   |u_{xx}(x,t)|^2\,dx
   \leq
   -\int_0^1
   \frac{\mu\sigma(\alpha)}{1+|u_x|^2}u^2_{xxx}\,dx
   +
   \Cr{const:5.3}\int_0^1 u_x^2\,dx,
  \end{split}
 \end{equation*}
 where $\Cl{const:5.3}>0$ depends only on $\mu$,
 $\sigma(\alpha(0))$ and $\Cr{const:4.5}$. Using Assumption
 \eqref{eq:1.A1}, Theorem \ref{thm:4.2} and the Poincar\'e inequality, we
 obtain
 \begin{equation}
  -
   \int_0^1
   \frac{\mu\sigma(\alpha)}{1+|u_x|^2}u^2_{xxx}\,dx
   \leq
   -
   \Cr{const:5.4}
   \int_0^1
   |u_{xx}(t,x)|^2\,dx,
 \end{equation}
 where $\Cl{const:5.4}>0$ is a positive constant depending only on
 $\mu$, $\Cr{const:1.1}$, $\sigma(\alpha_0)$,
 and $\sup_{x\in\mathbb{T}}(1+u_{0x}^2(x))$. By \eqref{eq:5.11}, we obtain
 \begin{equation*}
   \frac{d}{dt}
   \int_0^1
   |u_{xx}(x,t)|^2\,dx
   \leq
   -
   \Cr{const:5.4}
   \int_0^1
   |u_{xx}(t,x)|^2\,dx
   +
   \Cr{const:5.3}
   e^{-\Cr{const:5.1}t}
   \int_0^1
  |u_{0x}(x)|^2\,dx.
 \end{equation*}
 By the Gronwall inequality, we obtain \eqref{eq:5.12}.
\end{proof}

Next, we show exponential decay for
$\|u_{xxx}(\cdot,t)\|_{L^2(0,1)}$. We need the Schauder estimates for
the higher derivatives.

\begin{proposition}
 Let $(u,\alpha)$ be a time global solution of \eqref{eq:2.13} with the
 same assumptions as for Theorem \ref{thm:4.5}. Then, there is a constant
 $\Cl{const:5.5}>0$ depending only on $\gamma$, $\mu$, $\varepsilon$,
 $L$, $M$, $\Cr{const:1.1}$, and $\sigma(\alpha(0))>0$ such that
 \begin{equation}
  \label{eq:5.13}
  \|u_{x}\|_{C^{2,\beta}(Q_T^{\varepsilon})}
   \leq \Cr{const:5.5}.
 \end{equation}
\end{proposition}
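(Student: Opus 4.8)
The plan is to gain one extra $x$-derivative on $u$: differentiate \eqref{eq:2.13} once in $x$ and treat the resulting equation for $w:=u_x$ as a \emph{linear} uniformly parabolic equation whose coefficient and inhomogeneity are already controlled in a parabolic H\"older space by Theorem \ref{thm:4.5}; an interior-in-time Schauder estimate then yields \eqref{eq:5.13}.

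First I would apply Theorem \ref{thm:4.5} on the slightly larger cylinder $Q_T^{\varepsilon/2}=\mathbb{T}\times(\varepsilon/2,T)$ to obtain $\|u\|_{C^{2,\beta}(Q_T^{\varepsilon/2})}\leq C_0$, where $C_0$ depends only on $\gamma$, $\mu$, $\varepsilon$, $L$, $M$, $\Cr{const:1.1}$ and $\sigma(\alpha(0))$ and, in particular, not on $T$; thus $u_x$ and $u_{xx}$ are bounded and $\beta$-H\"older on $Q_T^{\varepsilon/2}$. I would also record that the $x$-independent function $t\mapsto\sigma(\alpha(t))$ satisfies $\sigma(\alpha(t))\geq\Cr{const:1.1}>0$ by \eqref{eq:1.A1}, is bounded above, and is Lipschitz in $t$ by Lemma \ref{lem:4.2} and \eqref{eq:4.19}, with all these bounds controlled by the same parameters exactly as in the proof of Theorem \ref{thm:4.5}; hence $\sigma(\alpha(\cdot))\in C^\beta(Q_T^{\varepsilon/2})$ with controlled norm.

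Next I would differentiate the classical form $u_t=\frac{\mu\sigma(\alpha)}{1+|u_x|^2}u_{xx}$ once in $x$ (the first line of \eqref{eq:5.10}) and set $w:=u_x$, so that $w_x=u_{xx}$, $w_{xx}=u_{xxx}$ and, on $Q_T^{\varepsilon/2}$,
\[
 w_t=a(x,t)\,w_{xx}+g(x,t),\qquad a:=\frac{\mu\sigma(\alpha)}{1+w^2},\quad g:=-\frac{2\mu\sigma(\alpha)\,w\,w_x^2}{(1+w^2)^2}.
\]
Since $1+w^2\geq1$ and $w^2\leq C_0^2$, the coefficient satisfies $\frac{\mu\Cr{const:1.1}}{1+C_0^2}\leq a\leq\|\mu\sigma(\alpha(\cdot))\|_{L^\infty}$, so the equation is uniformly parabolic on $Q_T^{\varepsilon/2}$; moreover $a\in C^\beta(Q_T^{\varepsilon/2})$, being a product and quotient of the bounded $C^\beta$ functions $\sigma(\alpha(\cdot))$ and $(1+w^2)^{\pm1}$. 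Likewise $g\in C^\beta(Q_T^{\varepsilon/2})$, because $w=u_x$, $w_x=u_{xx}$ and $\sigma(\alpha(\cdot))$ are bounded and $\beta$-H\"older and $(1+w^2)^{-2}$ is bounded away from $0$; the ellipticity constants of $a$ and the H\"older norms $\|a\|_{C^\beta(Q_T^{\varepsilon/2})}$, $\|g\|_{C^\beta(Q_T^{\varepsilon/2})}$ depend only on $C_0$, $\mu$, $\Cr{const:1.1}$ and the upper bound for $\sigma(\alpha(\cdot))$.

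Finally, since $x$ ranges over $\mathbb{T}=\R/\Z$ there is no spatial boundary, so I would apply the parabolic Schauder estimate that is interior only at the initial time (the same tool used above, \cite[Theorem 6.2.1]{MR0241822}) to $w_t=a\,w_{xx}+g$ on $\mathbb{T}\times(\varepsilon/2,T)$, obtaining
\[
 \|u_x\|_{C^{2,\beta}(Q_T^\varepsilon)}=\|w\|_{C^{2,\beta}(\mathbb{T}\times(\varepsilon,T))}\leq \Cr{const:5.5},
\]
with $\Cr{const:5.5}$ depending only on $\varepsilon$, the ellipticity constants of $a$, $\|a\|_{C^\beta(Q_T^{\varepsilon/2})}$, $\|g\|_{C^\beta(Q_T^{\varepsilon/2})}$ and $\|w\|_{C^0(Q_T^{\varepsilon/2})}\leq C_0$; covering $\mathbb{T}\times(\varepsilon,T)$ by unit-length time-translates of a fixed cylinder shows this constant is independent of $T$. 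Tracing the chain back, every quantity here was bounded in the previous steps purely in terms of $\gamma$, $\mu$, $\varepsilon$, $L$, $M$, $\Cr{const:1.1}$ and $\sigma(\alpha(0))$, which gives the asserted dependence of $\Cr{const:5.5}$. The only point requiring real care is this bookkeeping of constants, together with the routine verification that products, quotients, and $x$-independent Lipschitz-in-$t$ functions stay in the parabolic H\"older class $C^\beta$; the analytic content — one differentiation of the equation plus a single application of linear Schauder theory — is otherwise standard.
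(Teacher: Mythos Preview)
Your proof is correct and follows essentially the same route as the paper: differentiate once in $x$, regard the resulting equation for $w=u_x$ as a linear uniformly parabolic equation whose coefficients are controlled by the $C^{2,\beta}$ bound of Theorem~\ref{thm:4.5}, and apply interior Schauder estimates. The only cosmetic difference is that the paper keeps the lower-order piece as a first-order term $-\tfrac{2\mu\sigma(\alpha)u_xu_{xx}}{(1+u_x^2)^2}\,w_x$ and cites \cite[Theorem~4.9]{MR1465184}, whereas you absorb the same expression into an inhomogeneous source $g$; since $w_x=u_{xx}$ is already $C^\beta$, the two formulations are equivalent and lead to the same estimate.
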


\begin{proof}
 We let $w=u_x$. Then $w$ satisfies
 \begin{equation}
  w_t
   =
   \mu\sigma(\alpha(t))
   \left(
    \frac{1}{1+u_x^2}w_{xx}
    -
    \frac{2u_xu_{xx}}{(1+u_x^2)^2}w_x
   \right).
 \end{equation}
 With $u$ satisfying \eqref{eq:4.24}, we can apply the Schauder
 estimates \cite[Theorem 4.9]{MR1465184}. There is then a constant
 $\Cr{const:5.5}>0$ such that
 \begin{equation}
  \|u_x\|_{C^{2,\beta}(Q_T^\varepsilon)}
   =
   \|w\|_{C^{2,\beta}(Q_T^\varepsilon)}
   \leq
   \Cr{const:5.5}.
 \end{equation}
\end{proof}

Using the Schauder estimates, \eqref{eq:5.13}, and similar arguments in
Proposition \ref{prop:5.6}, we obtain

\begin{proposition}
 \label{prop:5.8}
 Let $(u,\alpha)$ be a classical solution of \eqref{eq:2.13}. Then
 there exists $\Cl{const:5.6}>0$ such that
 \begin{equation}
  \label{eq:5.14}
   \int_0^1
   |u_{xxx}(x,t)|^2\,dx
   \leq
   e^{-\Cr{const:5.6}t}
   \int_0^1
  |u_{0xxx}(x)|^2\,dx
 \end{equation}
 for $t>0$.
\end{proposition}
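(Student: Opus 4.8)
The plan is to carry out, one derivative higher, the energy computation of Proposition~\ref{prop:5.6}. First I would differentiate $\int_0^1|u_{xxx}(x,t)|^2\,dx$ in time, substitute the formula for $u_{xxxt}$ recorded in \eqref{eq:5.10}, and integrate by parts in $x$; since $u$ and all of its $x$-derivatives are $1$-periodic, no boundary terms appear. The only term carrying the highest derivative $u_{xxxxx}$ is $2\int_0^1\frac{\mu\sigma(\alpha)}{1+|u_x|^2}u_{xxx}u_{xxxxx}\,dx$, and one further integration by parts turns it into the dissipative term
\[
 -2\int_0^1\frac{\mu\sigma(\alpha)}{1+|u_x|^2}u_{xxxx}^2\,dx
\]
plus $4\int_0^1\frac{\mu\sigma(\alpha)\,u_xu_{xx}}{(1+|u_x|^2)^2}u_{xxx}u_{xxxx}\,dx$. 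After this step $\frac{d}{dt}\int_0^1 u_{xxx}^2\,dx$ equals the displayed dissipative term plus a finite sum of integrals, each of which is a coefficient built from $u_x,u_{xx},u_{xxx}$ and negative powers of $1+|u_x|^2$ multiplied by $u_{xxx}u_{xxxx}$ or by a monomial in $u_{xx}$ and $u_{xxx}$ of degree at least two.

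Next I would invoke the uniform bounds available: Theorem~\ref{thm:4.2} gives a bound for $\sup|u_x|$ (and hence a positive lower bound for $(1+|u_x|^2)^{-1}$), the Schauder estimate \eqref{eq:5.13} gives constant bounds for $\|u_{xx}\|_\infty$ and $\|u_{xxx}\|_\infty$, and $\sigma(\alpha(t))\le\sigma(\alpha(0))$ by \eqref{eq:1.A2} and Lemma~\ref{lem:4.2}. In each of the integrals above I would bound all but two of the derivative factors by these sup-norms, so that each term is controlled by a constant times $\int_0^1 u_{xxx}u_{xxxx}$, $\int_0^1 u_{xxx}^2$, $\int_0^1 |u_{xx}||u_{xxx}|$ or $\int_0^1 u_{xx}^2$; Young's inequality turns the first two into $\delta\int_0^1\frac{\mu\sigma(\alpha)}{1+|u_x|^2}u_{xxxx}^2\,dx+C\int_0^1 u_{xxx}^2\,dx$, and the identity $\int_0^1 u_{xxx}^2\,dx=-\int_0^1 u_{xx}u_{xxxx}\,dx\le\frac{\delta}{2}\int_0^1 u_{xxxx}^2\,dx+\frac{1}{2\delta}\int_0^1 u_{xx}^2\,dx$ (valid by periodicity) then reduces everything to $\delta\int_0^1 u_{xxxx}^2\,dx+C\int_0^1 u_{xx}^2\,dx$. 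Choosing $\delta$ small enough to absorb all these contributions into the dissipative term yields
\[
 \frac{d}{dt}\int_0^1 u_{xxx}^2\,dx\le-\int_0^1\frac{\mu\sigma(\alpha)}{1+|u_x|^2}u_{xxxx}^2\,dx+C\int_0^1 u_{xx}^2\,dx .
\]

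Finally I would use \eqref{eq:1.A1}, Theorem~\ref{thm:4.2} and the Poincar\'e inequality: since $\int_0^1 u_{xxx}\,dx=u_{xx}(1,t)-u_{xx}(0,t)=0$, one has $\int_0^1 u_{xxx}^2\le C_P\int_0^1 u_{xxxx}^2$, so the dissipative term dominates $-c\int_0^1 u_{xxx}^2\,dx$ for a constant $c>0$ depending only on $\mu$, $\Cr{const:1.1}$, $\sigma(\alpha(0))$ and $\sup_{\mathbb{T}}(1+u_{0x}^2)$. Substituting the decay $\int_0^1 u_{xx}^2\,dx\le e^{-\Cr{const:5.2}t}\int_0^1 u_{0xx}^2\,dx$ of Proposition~\ref{prop:5.6} and applying the Gronwall inequality, exactly as at the end of the proof of Proposition~\ref{prop:5.6}, then gives \eqref{eq:5.14} for a suitable $\Cr{const:5.6}>0$. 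The only real difficulty is the bookkeeping in the second step: one must verify that, after the integrations by parts, every occurrence of the uncontrolled top derivative $u_{xxxx}$ can be matched against the single dissipative term, and that what is left over is a multiple of $\int_0^1 u_{xx}^2\,dx$ --- which decays exponentially by Proposition~\ref{prop:5.6} --- rather than of $\int_0^1 u_{xxx}^2\,dx$ with a constant too large to be absorbed; this is exactly what the uniform $C^{2,\beta}$ estimate \eqref{eq:5.13} on $u_x$ makes possible.
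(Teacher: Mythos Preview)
Your proposal is correct and matches the paper's approach exactly: the paper simply states that Proposition~\ref{prop:5.8} follows from ``the Schauder estimates, \eqref{eq:5.13}, and similar arguments in Proposition~\ref{prop:5.6}'', and what you have written is a faithful expansion of that sketch---differentiate $\int_0^1 u_{xxx}^2\,dx$, integrate by parts using \eqref{eq:5.10}, absorb the cross terms via Young's inequality with the $L^\infty$ bounds on $u_{xx},u_{xxx}$ supplied by \eqref{eq:5.13}, apply Poincar\'e, feed in the decay from Proposition~\ref{prop:5.6}, and close with Gronwall. The interpolation step $\int u_{xxx}^2=-\int u_{xx}u_{xxxx}$ you insert to trade $\int u_{xxx}^2$ for $\delta\int u_{xxxx}^2+C_\delta\int u_{xx}^2$ is a clean way to guarantee the absorption goes through regardless of the size of the constants coming from \eqref{eq:5.13}.
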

Finally, we prove the asymptotic behavior of the global solution.
\begin{proof}%
 [Proof of Theorem \ref{thm:5.1}]
Using Proposition \ref{prop:5.8} and the Sobolev inequality, we obtain
\begin{equation}
 \label{eq:5.15}
 |\kappa(x,t)|
  \leq
  |u_{xx}(x,t)|
  \leq
  \int_0^1
  |u_{xxx}(x,t)|\,dx
  \leq
  \left(
   \int_0^1
   |u_{xxx}(x,t)|^2\,dx
  \right)^\frac12
  \leq
  \Cr{const:5.7}
  e^{-\frac{\Cr{const:5.6}}2t}
\end{equation}
for some $\Cl{const:5.7}>0$. Thus $u_{xx}$ and  curvature $\kappa$ go
to $0$ exponentially and uniformly on $[0,1)$. In addition, we can show
that $u_x$ converges to $0$ exponentially and uniformly on $[0,1)$,
similarly.  Therefore we only need to prove that there exists a constant
$u_\infty$ such that $u $ goes to $u_\infty$ exponentially and uniformly
on $[0,1)$. For any $0 \leq t_1 < t_2$ and $x \in [0,1)$, we have
 \begin{equation*}
 \begin{split}
 &|u(x,t_2) -u(x,t_1)| \leq \int _{t_1} ^{t_2} |u_t (x,s)| \, ds
 \leq \int _{t_1} ^{t_2} \mu \sigma (\alpha (s)) \frac{|u_{xx} (x,s)|}{1+|u_x (x,s)| ^2} \, ds\\
 \leq & \, \mu \max_{|\alpha| \leq |\alpha_0|} \sigma (\alpha) \int _{t_1} ^{t_2} |u_{xx} (x,s)| \, ds
 \leq \mu \max_{|\alpha| \leq |\alpha_0|} \sigma (\alpha) \int _{t_1} ^{t_2} \Cr{const:5.7} e^{-\frac{\Cr{const:5.6}}{2} s} \, ds \\
 \leq &\mu \max_{|\alpha| \leq |\alpha_0|} \sigma (\alpha) \frac{2\Cr{const:5.7}}{C_{12}} e^{-\frac{\Cr{const:5.6}}{2} t_1},
 \end{split}
 \end{equation*}
 where \eqref{eq:4.31} and \eqref{eq:5.15} are used. Hence, there exists
 $u_\infty=u_\infty (x)$ such that $u$ goes to $u_\infty$ exponentially
 for any $x \in [0,1)$. In addition, with $u_x$ converging to $0$
 uniformly, $u_\infty$ should be a constant.  Consequently, $u$
 converges to constant $u_\infty$ exponentially and uniformly on
 $[0,1)$.
\end{proof}

\section*{Acknowledgment}

The authors thank the referees for their careful reading of the paper
and helpful comments. This work was supported by JSPS KAKENHI Grant
Number JP20K14343, JP18K13446, JP18H03670, JP16K17622 and Leading
Initiative for Excellent Young Researchers (LEADER) operated by Funds
for the Development of Human Resources in Science and Technology.



\providecommand{\bysame}{\leavevmode\hbox to3em{\hrulefill}\thinspace}

\end{document}